\newcommand{\bz}{\mathbb Z}
\newcommand{\br}{\mathbb R}
\newcommand{\filt}{\mathcal{F}}
\DeclareMathOperator{\CKh}{CKh}
\DeclareMathOperator{\SKh}{AKh}
\DeclareMathOperator{\Kh}{Kh}
\newcommand{\diagram}{\mathcal{D}}
\newcommand{\ul}{\underline}
\DeclareMathOperator{\coker}{coker}
\DeclareMathOperator{\reducedsub}{\widetilde{CKh
}}
\DeclareMathOperator{\reducedquo}{\underline{CKh}}
\DeclareMathOperator{\subkapp}{\tilde{\kappa}_p}
\DeclareMathOperator{\quotkapp}{\underline{\kappa}_p}
\DeclareMathOperator{\subpsip}{\widetilde{\psi}_p}
\DeclareMathOperator{\quotpsip}{\underline{\psi}_p}
\newtheorem{Thm}{Theorem}
\newtheorem*{Thm*}{Theorem}
\newtheorem{Prop}[Thm]{Proposition}
\newtheorem{Lem}[Thm]{Lemma}
\newtheorem{Cor}[Thm]{Corollary}
\newtheorem*{Prop*}{Proposition}
\theoremstyle{definition}
\newtheorem{Def}[Thm]{Definition}
\newtheorem*{Def*}{Definition}
\theoremstyle{remark}
\newtheorem*{Rem*}{Remark}
\newtheorem*{Ex*}{Example}
\theoremstyle{definition}
\begin{document}  

\author{Diana Hubbard and Adam Saltz}
\title[An annular refinement of the transverse element in $\Kh$]{An annular refinement of the transverse element in Khovanov homology}
\maketitle
\thispagestyle{empty}

\begin{abstract} 
We construct a braid conjugacy class invariant $\kappa$ by refining Plamenevskaya's transverse element $\psi$ in Khovanov homology via the annular grading.  While $\kappa$ is not an invariant of transverse links, it distinguishes some braids whose closures share the same classical invariants but are not transversely isotopic.  Using $\kappa$ we construct an obstruction to negative destabilization (stronger than $\psi$) and a solution to the word problem in braid groups.  Also, $\kappa$ is a lower bound on the length of the spectral sequence from annular Khovanov homology to Khovanov homology, and we obtain concrete examples in which this spectral sequence does not collapse immediately.  In addition, we study these constructions in reduced Khovanov homology and illustrate that the two reduced versions are fundamentally different with respect to the annular filtration.
\end{abstract}

\section{Introduction}

Khovanov homology has proven to be a powerful tool for studying links and link cobordisms in $S^3$.  Given a link $L$ with diagram $\diagram$, the homology of the bigraded Khovanov chain complex $\CKh(\diagram)$ is a link invariant denoted $\Kh(L)$.  In \cite{plamenevskaya2004transverse}, Plamenevskaya constructs from Khovanov homology an invariant of transverse links presented as braid closures.\footnote{Recall that Orevkov and Shevchishin \cite{OrevShev}, and independently Wrinkle \cite{Wrinkle}, have shown that there is a one-to-one correspondence between transverse links in $S^3$ (up to isotopy) and braids (up to positive stabilization and isotopy).  We review this correspondence in Section 2.}  Recall that the Khovanov chain complex is constructed by assigning a vector space to each complete resolution of a diagram.  This vector space over $\bz/2\bz$ is generated by labelings of the components in each resolution by the symbols $v_+$ and $v_-$.  The diagram of an $n$-strand braid closure $L$ has a unique resolution into an $n$-strand unlink, and the transverse element $\psi(L)$ is the labeling of each component of this unlink by $v_-$.  It is easy to see that $\psi(L)$ is a cycle.

\begin{Thm*}
\cite{plamenevskaya2004transverse} Let $L$ and $L'$ be transversely isotopic transverse links with braid closure diagrams $\diagram$ and $\diagram'$, respectively.  Then any sequence of transverse Reidemeister moves connecting the two diagrams induces a map $\CKh(\diagram) \to \CKh(\diagram')$ which sends $\psi(L)$ to $\psi(L')$.
\end{Thm*}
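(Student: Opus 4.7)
The plan is to invoke the transverse Markov theorem (by Orevkov--Shevchishin \cite{OrevShev} and Wrinkle \cite{Wrinkle}, cited in the footnote above) to reduce the claim to a finite list of elementary moves, and then to verify invariance move by move by computing the induced chain map on the braidlike vertex.

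Any transverse isotopy between braid closures is a finite composition of (a) braid isotopies, generated by braidlike Reidemeister II and III moves together with planar isotopy, and (b) positive Markov stabilizations $\beta \mapsto \beta\sigma_n$ together with their inverses; negative Markov stabilization is not transverse and need not be considered. Because Khovanov chain maps compose, it suffices to show that each elementary move induces a chain map carrying the all-$v_-$ generator at the braidlike resolution of the source to the all-$v_-$ generator at the braidlike resolution of the target.

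For the braid R2 and R3 moves, the braidlike resolutions on the two sides are isotopic planar diagrams, so the oriented generators correspond naturally under the move. The standard R2 and R3 chain maps---built from saddle cobordisms together with delooping and Gaussian elimination---can then be checked by hand to carry these oriented generators to one another. The calculation reduces to the Frobenius algebra relations $v_-\cdot v_- = 0$ and $\Delta(v_-) = v_-\otimes v_-$, which ensure that merges annihilate the stray terms that would otherwise obstruct invariance and that splits reproduce an all-$v_-$ label.

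The substantive step is invariance under positive stabilization $\beta\mapsto\beta\sigma_n$, which on the closure is a positive Reidemeister I move. The braidlike resolution of $\beta\sigma_n$ has exactly one more component than that of $\beta$---a small circle created by the new strand---so the target transverse element is $v_-^{\otimes(n+1)}$. The induced chain map, restricted to the oriented vertex, is essentially the delooping that labels this new circle with $v_-$, and therefore sends $v_-^{\otimes n}$ to $v_-^{\otimes(n+1)}$. The main obstacle in this program is pinning down this chain map precisely; positivity of the new crossing is essential here, since the analogous computation for a negative stabilization would force a $v_+$ label on the new circle and so fail to preserve $\psi$. This asymmetry is precisely what allows $\psi$ to detect transverse rather than smooth structure.
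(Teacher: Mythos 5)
Your proposal is correct and follows essentially the same route as the cited source (this paper does not reprove Plamenevskaya's theorem, it only quotes it and later uses the relevant maps): reduce via the transverse Markov theorem to braidlike R2/R3 moves and positive (de)stabilization, then check that each induced chain map fixes the all-$v_-$ generator of the braidlike resolution using $v_-\cdot v_-=0$ and $\Delta(v_-)=v_-\otimes v_-$. The one loose end you flag---pinning down the stabilization map---is handled exactly as in Proposition \ref{boundspos}: the map is the comultiplication-type map $\rho(v_-)=v_-\otimes v_-$, $\rho(v_+)=v_+\otimes v_-+v_-\otimes v_+$ at the stabilization point (with $\phi(z\otimes v_-)=z$, $\phi(z\otimes v_+)=0$ for destabilization), which visibly carries $\psi$ to $\psi$, so your argument goes through.
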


This implies that the homology class $	[\psi(L)]$ (and indeed, the chain $\psi(L)$) is a transverse invariant which detects the classical self-linking number.  An invariant of transverse links is called \emph{effective} if it takes on different values for a pair of transverse links with the same self-linking number and smooth link type.  A smooth link type is called \emph{transversely non-simple} if it supports transversely non-isotopic links with the same self-linking number.  It is not known if Plamenevskaya's invariant is effective.

Given a link $L$ equipped with an embedding into a thickened annulus (i.e. $L \subset A \times I \subset S^{3}$), its Khovanov chain complex can be endowed with an additional grading which we call the $k$-grading, first studied by \cite{apsSKH} and \cite{robertsSKH}.  For a resolution with a single component, $k(v_{\pm}) = \pm 1$ if the component is not null-homotopic in $A \times I$, and $k(v_{\pm}) = 0$ otherwise.  We extend the grading to tensor products by summation.  The Khovanov differential is non-increasing in the $k$-grading, which induces a filtration on the Khovanov complex. The homology of the associated graded chain complex is called \emph{annular Khovanov homology}, denoted here as $\SKh(L)$ (elsewhere also called \emph{sutured annular Khovanov homology} or \emph{sutured Khovanov homology} and denoted SKh($L$)).  $\SKh$ is an invariant of annular links and not a transverse invariant.  (See Section \ref{preliminaries} for more details.)  For a braid closure $\bar{\beta}$, the element $\psi(\bar{\beta}) \in \CKh(\bar{\beta})$ is the unique element with lowest $k$-grading.  

Standard algebraic machinery (see \cite{hutchings2011introduction} for an introduction and \cite{Mccleary} for a thorough treatment) produces a spectral sequence from the associated graded object of a filtered complex to the homology of that complex and therefore from $\SKh$ to $\Kh$.  Our original goal in this work was to define a (perhaps effective) transverse invariant by exploring the behavior of Plamenevskaya's class in this spectral sequence. $\SKh$ is known to distinguish some braids whose closures are smoothly isotopic but not transversely isotopic (see \cite{hubbard2015sutured}), and so it is natural to suspect that the spectral sequence from $\SKh$ to $\Kh$ also captures non-classical information.

In this paper we define a refinement of Plamenevskaya's invariant that measures how long $\psi(L)$ survives in the spectral sequence, or equivalently, the lowest filtration level at which the class of $\psi(L)$ vanishes.  For a braid $\beta$ with closure $\bar\beta$, write $\filt_i(\bar\beta) = \{x \in \CKh(\bar\beta) : k(x) \leq i\}$.  

\begin{Def}\label{def:kappa} Let $\beta$ be an $n$-strand braid with closure $\bar\beta$ and suppose that $\psi(\bar\beta)$ is a boundary in $\CKh(\bar\beta)$.  Define \[ \kappa(\beta) = n + \min \{i :  [\psi(\bar\beta)] = 0 \in H(\filt_i)\}. \]  If $\psi(\bar\beta)$ is not a boundary then define $\kappa(\beta) = \infty$.\end{Def}

However, $\kappa(\beta)$ is a conjugacy class invariant of braids rather than a transverse invariant.

\begin{Thm} 
$\kappa$ is an invariant of conjugacy classes in the braid group $B_n$.  It may increase by $2$ under positive stabilization and is thus not a transverse invariant.  
\end{Thm}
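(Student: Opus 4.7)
The plan is to treat the two assertions of the theorem separately.

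For conjugation invariance, I would proceed as follows. The closures $\bar\beta$ and $\overline{\gamma\beta\gamma^{-1}}$ are related by a Markov I move, which is realized by an ambient isotopy of $S^3$ fixing the braid axis setwise. Such an isotopy decomposes into a sequence of braid-like Reidemeister II and III moves, i.e., Reidemeister moves supported in disks disjoint from the braid axis. For each such move, Plamenevskaya's theorem supplies a chain map $\CKh(\diagram) \to \CKh(\diagram')$ sending $\psi$ to $\psi$. The additional ingredient needed here is that each of these chain maps is filtered with respect to the $k$-grading. This holds because the supporting disk of each local move is disjoint from the braid axis, so the annular isotopy class of every circle in every resolution --- and hence its contribution to $k$ --- is unchanged by the component bijection the move induces; combined with the fact that the local merge/split maps on generators respect $k$, this makes each chain map filtered. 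Passing to filtered subcomplexes and their homologies, for every $i$ the induced map carries $[\psi(\bar\beta)]$ to $[\psi(\overline{\gamma\beta\gamma^{-1}})]$ in $H(\filt_i)$. Taking the minimum $i$ and adding $n$ yields $\kappa(\beta) = \kappa(\gamma\beta\gamma^{-1})$.

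For positive stabilization, the plan is to exhibit an explicit braid $\beta \in B_n$ with $\kappa(\beta \sigma_n) = \kappa(\beta) + 2$. A sensible starting point is a braid whose closure already contains an obvious negative stabilization, so that $\psi$ is automatically a boundary (and $\kappa < \infty$), and which is small enough that the annular-filtered chain complexes of both $\bar\beta$ and $\overline{\beta\sigma_n}$ are tractable by hand or by computer. Identifying the filtration level at which $[\psi]$ dies in each case and combining with the shift by $1$ in the normalizing term $n$ produces the desired jump of $2$. The non-transverse-invariance conclusion is then automatic, since positive stabilization is the transverse Markov move.

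The main obstacle is the filteredness check in the conjugation argument: one must verify, for each of Plamenevskaya's Reidemeister II and III chain maps, that no resolution circle involved crosses the braid axis and that each local merge/split respects $k$. The subsidiary challenge is locating an explicit example in the stabilization half whose $\kappa$ actually jumps rather than remains constant; no general argument for such a jump can exist, because otherwise $\kappa$ (after an appropriate shift) would descend to a transverse invariant, contradicting the second half of the theorem.
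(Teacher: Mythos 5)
Your conjugation argument is essentially the paper's: the paper proves an algebraic lemma (filtered chain maps can only decrease the filtration level at which a cycle dies, with equality when a filtered map exists in both directions) and then checks that the Reidemeister 2 and 3 maps for moves supported away from the braid axis are filtered and carry $\psi$ to $\psi$, exactly as you propose. One small point: as written, a single filtered map from $\CKh(\bar\beta)$ to $\CKh(\overline{\gamma\beta\gamma^{-1}})$ only gives the inequality $\kappa(\gamma\beta\gamma^{-1}) \leq \kappa(\beta)$; you need to also invoke the map induced by the inverse sequence of moves (or the symmetry $\beta = \gamma^{-1}(\gamma\beta\gamma^{-1})\gamma$) to upgrade this to equality, which is precisely the two-sided hypothesis in the paper's Lemma \ref{lem:algebra}. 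This is implicit in your setup but should be said.

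The second half is where the genuine gap lies. The claim that $\kappa$ \emph{may} increase under positive stabilization is inherently example-based (as you yourself note), and you have not produced or verified any example: your proposal only describes a search strategy. The paper closes this by computation, showing $\kappa(B(0,0)) = 2$ while $\kappa(S^{p}B(0,0)) = 4$ for every innermost point $p$, where $B(0,0)$ is one of the Khandhawit--Ng flype braids; this computation (together with the bound $\kappa(\beta) \leq \kappa(S^p\beta) \leq \kappa(\beta)+2$ of Proposition \ref{boundspos}) is the actual content of the second sentence of the theorem. Moreover, your suggested starting point --- a braid ``whose closure already contains an obvious negative stabilization'' --- is likely self-defeating: if $\beta = \alpha\sigma_{n-1}^{-1}$ with $\alpha$ not involving $\sigma_{n-1}$, then the positively stabilized word still contains $\sigma_{n-1}^{-1}$ and no $\sigma_{n-1}$, so Proposition \ref{sigmanegative} forces $\kappa(S^p\beta) = 2$ and there is no jump. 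So the example must be subtler than a visible negative stabilization (the paper's $B(0,0)$ has $\kappa = 2$ for less obvious reasons), and until such an example is exhibited and its two values of $\kappa$ are actually computed, the non-invariance statement remains unproved in your write-up.
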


Nevertheless, $\kappa$ can distinguish conjugacy classes of some braids whose closures are transversely non-isotopic but have the same classical invariants.  

\begin{Prop}
\label{ngexample} For any $a,b \in \{0,1,2\}$, the pair of closed $4$-braids
\[ A(a,b) = \sigma_{3}\sigma_{2}^{-2}\sigma_{3}^{2a+2}\sigma_{2}\sigma_{3}^{-1}\sigma_{1}^{-1}\sigma_{2}\sigma_{1}^{2b+2} \,\,\,\,\,\text{and}\,\,\, \]
\[ B(a,b) = \sigma_{3}\sigma_{2}^{-2}\sigma_{3}^{2a+2}\sigma_{2}\sigma_{3}^{-1}\sigma_{1}^{2b+2}\sigma_{2}\sigma_{1}^{-1}, \]
related by a negative flype, can be distinguished by $\kappa$: indeed, $\kappa(A(a,b)) = 4$ and $\kappa(B(a,b)) = 2$.  For any pair $(a,b)$, the braids $A(a,b)$ and $B(a,b)$ are transversely non-isotopic but have the same classical invariants \cite{khandhawit2010family}.
\end{Prop}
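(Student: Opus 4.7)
The statement asserts specific integer values of $\kappa$ on eighteen explicit $4$-braids related in pairs by a negative flype, so the proof is essentially a direct computation. My plan is to compute $\kappa$ for each braid using the definition and a modest amount of filtered linear algebra, then invoke \cite{khandhawit2010family} for the classical-invariant and transverse-non-isotopy assertions.

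For each pair $(a,b) \in \{0,1,2\}^2$ I would first construct the Khovanov chain complex $\CKh(\bar\beta)$ of the closure of $\beta \in \{A(a,b),\, B(a,b)\}$ over $\bF_2$, keeping track of the $k$-grading by recording, for each circle in each complete resolution, whether it is essential or trivial in the annulus. The cycle $\psi(\bar\beta)$ is then identified as the generator labeling every component of the oriented resolution by $v_-$; since all four Seifert circles of a closed $4$-braid are essential in the annulus, this element sits in $k$-grading $-4$ and hence in the lowest filtration piece $\filt_{-4}$.

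By Definition \ref{def:kappa}, computing $\kappa(\beta)$ reduces to finding the smallest integer $i \geq -4$ such that $\psi(\bar\beta)$ lies in the image of the differential restricted to the subcomplex $\filt_i$. For each candidate $i$ this is a single linear-algebra question over $\bF_2$: does there exist $y \in \filt_i$ with $dy = \psi(\bar\beta)$? I would check this by forming the matrix of $d|_{\filt_i}$ in an appropriate basis and testing membership of $\psi(\bar\beta)$ in its column space. The asserted values would then correspond to $i = 0$ for the $A$ family (yielding $\kappa = 4$) and $i = -2$ for the $B$ family (yielding $\kappa = 2$); in particular, one must verify that no witness $y$ exists in $\filt_{-1}$ for $A(a,b)$ nor in $\filt_{-3}$ for $B(a,b)$.

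The main obstacle is computational scale rather than conceptual difficulty: the braid words have length between $10$ and $14$, so the cube of resolutions of each closure has up to $2^{14}$ vertices and the full chain complex has substantially more generators. Performing the filtered solve repeatedly is not feasible by hand, so I would implement the calculation in software, either by extending an existing Khovanov homology package to track the annular filtration or by writing a dedicated routine. With sparse storage of the differential and quotienting by the smaller filtration piece before solving, the computation is well within reach on standard hardware. Once the nine values of $\kappa$ on each side are confirmed to agree with the asserted constants, independence from the parameters $(a,b)$ follows immediately from the computation itself, and the remainder of the proposition — that each $A(a,b)$ and $B(a,b)$ share all classical invariants while being transversely non-isotopic — is exactly \cite{khandhawit2010family}.
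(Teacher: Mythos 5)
Your proposal is correct and matches the paper's approach: the authors also establish the values $\kappa(A(a,b))=4$ and $\kappa(B(a,b))=2$ by direct computer computation (using the program they advertise), citing \cite{khandhawit2010family} for the classical-invariant and transverse non-isotopy claims. The filtered linear-algebra procedure you describe is exactly what such a computation amounts to, so there is nothing to add.
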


Lipshitz, Ng, and Sarkar, using a filtered refinement of $\psi(L)$ valued in the Lee--Bar-Natan deformation of Khovanov homology, showed that Plamenevskaya's class is invariant under negative flypes \cite{lipshitz2013transverse}.  The above proposition could be seen as evidence that $\kappa$ carries non-classical information even if $\psi$ does not.

$\kappa$ has nice properties mirroring those of $\psi$, and our calculations have some interesting consequences. In Section \ref{section:properties} we collect these observations.  In particular, we show using Proposition \ref{ngexample} that the spectral sequence from $\SKh$ to $\Kh$ does not necessarily collapse immediately, providing a counterexample to Conjecture 4.2 from \cite{hunt2015computing}.  In addition, our work together with that of Baldwin and Grigsby in \cite{baldwin2012categorified} provides a solution (faster than that of \cite{baldwin2012categorified}) to the word problem for braids. 

Recall that the Khovanov chain complex has two reduced variants obtained by placing a basepoint on the link diagram \cite{khovanovreduced}.  The homologies of these complexes are isomorphic as bigraded objects up to a global grading shift.  The behavior of $\kappa$ under positive stabilization provided some promise that a reduced analogue of $\kappa$ might be a transverse invariant. In Section \ref{section:reduced} we define $\kappa$ for both versions of reduced Khovanov homology.  However, these constructions depend on the placement of the basepoint.  We still have some hope that these reduced constructions will provide non-classical transverse information.  In any case, the fact that the two reduced variants are largely independent demonstrates that the two reductions of Khovanov homology are quite different with respect to the $k$-grading.

This project was inspired by similar spectral sequence constructions in Floer homology.  Let $(Y, \xi)$ be a contact three-manifold.  Recall that there are elements $c_\xi \in \widehat{HF}(Y)$ (Heegaard Floer homology) and $\emptyset_\xi \in ECH(Y)$ (embedded contact homology) which are invariants of $\xi$.  It is known that each of these elements vanishes if $(Y,\xi)$ is overtwisted (\cite{contactinvariant}, \cite{echcontactinvariant}) or if $(Y,\xi)$ contains \emph{Giroux $n$-torsion} for any $n > 0$ (\cite{contactvanishes}) (both converses are false).  In \cite{algebraictorsion}, Latschev and Wendl study \emph{algebraic torsion} in symplectic field theory and show that it can obstruct fillability.  Hutchings adapts this work to embedded contact homology by constructing a relative filtration on $ECH(Y)$.  He defines the algebraic torsion of the contact element to be the lowest filtration level at which $\emptyset_\xi$ vanishes.  As $ECH$ is known to be isomorphic to $\widehat{HF}$ (see \cite{echHF}) by an isomorphism carrying $\emptyset_\xi$ to $c_\xi$, it is reasonable to suspect that there is an analogous construction in Heegaard Floer homology .  This is the subject of ongoing work by Baldwin and Vela-Vick and independently by Kutluhan, Mati\'c, Van-Horn Morris, and Wand.  

Now let $L$ be a link with mirror $m(L)$, and let $\Sigma(L)$ be the double cover of $S^3$ branched over $L$.  There is a spectral sequence $E^i(L)$ so that $E^2 \cong \widetilde{CKh}(m(L))$ and $E^\infty = \widehat{HF}(\Sigma(L))$ \cite{OzsvathSzabo}.  If $L$ is a transverse link then $\Sigma(L)$ inherits a contact structure $\xi(L)$.  Plamanevskaya conjectured \cite{plamenevskaya2004transverse} and Roberts proved \cite{robertsSKH} (see also \cite{BaldwinPlam} ) that $\psi(L)$ ``converges'' to $c_{\xi(L)}$ in the sense that there is some $x \in E^0(L)$ so that $[x]_2 = \psi(L)  \in E^2(L)$ and $[x]_\infty = c_{\xi(L)} \in E^\infty(L)$.  This is a weak sort of convergence -- in particular, the vanishing or non-vanishing of the two elements are independent -- but it has been used fruitfully in e.g. \cite{BaldwinPlam}.  We hope to use this connection to derive contact-theoretic information from $\kappa$.

\subsection*{Acknowledgements} We thank John Baldwin for suggesting the project and Eli Grigsby for pointing us towards the annular grading.  We also thank them as well as Olga Plamenevskaya and David Treumann for helpful conversations and advice.

\section{Preliminaries}\label{preliminaries}

\begin{figure}[ht!]
\includegraphics[scale=0.75]{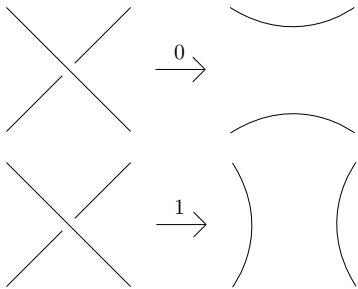}
\caption{Resolutions for the Khovanov chain complex.}
\label{resolutions}
\end{figure}

Let $L \subset S^3$ be a link with diagram $\diagram$.  Suppose that $\diagram$ has $c$ crossings.  Each crossing may be resolved in one of two ways as shown in Figure \ref{resolutions}.  Thus a diagram with $c$ crossings admits $2^c$ complete resolutions, indexed by $\{0,1\}$.\footnote{We assume that the crossings of $\diagram$ are ordered.}  Let $V$ be the vector space generated over $\bz/2\bz$ by the symbols $v_+$ and $v_-$.  For a complete resolution $I$ with $m$ closed components, define $\CKh_I(\diagram) = V^{\otimes m}$.  Concretely, the simple tensors in $\CKh_I(\diagram)$ are labelings of the components of the resolved diagram with $v_+$ and $v_-$.  We refer to these as \emph{canonical generators}.  The underlying vector space for the Khovanov chain complex is $\CKh(\diagram) = \bigoplus_{I \in \{0,1\}^c} \CKh_I(\diagram)$.  The vector space $\CKh(\diagram)$ admits two gradings called $h$ and $q$.  See \cite{khovanov2000} and \cite{bar2005khovanov} for a more complete description of $\CKh$, its gradings, and its differential.  There are two version of \emph{reduced Khovanov homology} which we study in Section \ref{section:reduced}.

Let $A \subset \br^2$ be a standard annulus in $\br^2$.   An \emph{annular link} is a link $L \subset A \times [0,1]$.  Let $\gamma$ be a simple closed curve from the inner boundary of $A \times \{\frac{1}{2}\}$ to the outer boundary.  Let $\pi: A \times I \to A$ be the projection.  Let $L$ be an annular link with diagram $\diagram$.  A component $C$ of the resolved diagram is called \emph{trivial} if the mod 2 intersection number of $\pi(C)$ with $\pi(\gamma)$ is $0$ and is called \emph{non-trivial} otherwise.  The $k$-grading of a generator $x$ is 
\begin{align*}k(x) &= \#\{\text{non-trivial circles in $x$ labeled $v_+$}\} \\ &- \#\{\text{non-trivial circles in $x$ labeled $v_-$}\}.\end{align*}
Roberts \cite{robertsSKH}, following \cite{apsSKH}, shows that the Khovanov differential is non-increasing in $k$.  Thus the subcomplexes $\filt_i(\diagram) = \{x \in \CKh(\diagram) : k(x) \leq i\}$ form a bounded filtration of $\CKh(L)$.  Moreover, the filtered chain homotopy type of $\CKh(\diagram)$ is an invariant of $L$ as an annular link.  For a filtered complex $(X',d',\filt'_i)$ the \emph{associated graded object} is the direct sum of complexes $\bigoplus_i \filt'_i/\filt'_{i-1}$.  There is a spectral sequence from the associated graded object to the homology of the total complex, see \cite{Mccleary}.  The associated graded object of the Khovanov chain complex filtered by $k$ is called \emph{annular Khovanov homology} and is denoted by $\SKh(L)$.  Roberts concludes the following.

\begin{Thm} \cite{robertsSKH} For any annular link $L$ there is a spectral sequence from $\SKh(L)$ to $Kh(L)$.\end{Thm}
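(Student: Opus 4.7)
The plan is to invoke the standard spectral sequence associated to a bounded filtered chain complex. The ingredients are essentially in place from the preliminaries; the proof amounts to assembling them and verifying boundedness.

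First I would verify that the filtration $\{\filt_i(\diagram)\}_{i \in \bz}$ is bounded. For any complete resolution with $m$ circles, each canonical generator has $k$-grading in $[-m, m]$, since trivial circles contribute $0$ and non-trivial circles contribute $\pm 1$. Letting $N$ be the maximum number of circles over all resolutions of $\diagram$, we have $\filt_{-N-1}(\diagram) = 0$ and $\filt_N(\diagram) = \CKh(\diagram)$, so the filtration is bounded. Combined with the already-cited fact that $d$ is non-increasing in $k$ (which ensures each $\filt_i$ is a subcomplex), this places us in the hypotheses of the standard construction.

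Next, I would apply the standard machinery (e.g. McCleary, Theorem 2.6) to produce a spectral sequence $(E^r, d^r)$ whose $E^0$ page is the associated graded complex $\bigoplus_i \filt_i(\diagram)/\filt_{i-1}(\diagram)$ with differential induced by $d$, and which abuts to $H_*(\CKh(\diagram)) = \Kh(L)$. Passing to homology with respect to $d^0$ identifies the $E^1$ page with $\SKh(L)$ by definition, yielding a spectral sequence $\SKh(L) \Rightarrow \Kh(L)$ at the level of chain complexes associated to a fixed diagram.

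The main --- though mild --- obstacle is invariance: a priori the spectral sequence depends on the chosen diagram $\diagram$. This is resolved by the fact (already recorded in the preliminaries) that the filtered chain homotopy type of $(\CKh(\diagram), d, \filt)$ is an invariant of $L$ as an annular link. Since a filtered chain homotopy equivalence of bounded filtered complexes induces an isomorphism of the associated spectral sequences from the $E^1$ page onward, the entire spectral sequence $\SKh(L) \Rightarrow \Kh(L)$ depends only on the annular link $L$, as required.
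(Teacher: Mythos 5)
Your proposal is correct and follows essentially the same route as the paper, which simply cites Roberts' result obtained by exactly this standard construction: the $k$-filtration is bounded, the differential is non-increasing in $k$, the associated graded gives $\SKh$, and invariance follows from the filtered chain homotopy type being an annular link invariant. Nothing further is needed.
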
   

Braid closures may be naturally regarded as annular links, and annular Khovanov homology has proven to be a powerful tool in studying braids.  See, for example, \cite{grigsby2011gradings}, \cite{baldwin2012categorified}, \cite{grigsby2013sutured}, and \cite{hubbard2015sutured}.

Braid closures are also closely related to transverse links.  We will say that a link $L \subset \br^3$ is \emph{transverse} if it is everywhere transverse to the standard radial contact structure $\xi = \ker(dz + r^2d\theta)$.  (For convenience we state these results for $\br^3$ but they all extend to $S^3$.)  Two links are \emph{transversely isotopic} if they are isotopic through transverse links.  A link which is braided around the $z$-axis may be isotoped to a transverse link, and Bennequin \cite{Bennequin} shows that every transverse link may be isotoped to such a braid closure.   The Transverse Markov Theorem of \cite{OrevShev} and \cite{Wrinkle} states that if $\beta$ and $\beta'$ are braids with transverse closures $\bar\beta$ and $\bar\beta'$ then $\bar\beta$ and $\bar\beta'$ are transversely isotopic if and only if $\beta$ and $\beta'$ are related by a sequence of conjugations, positive stabilizations, and positive destabilizations.  Positive stabilization is the operation which sends $\beta \in B_n$ to $\beta\sigma_n \in B_{n+1}$, and destabilization is its inverse.  There are two classical invariants of transverse links: the underlying smooth link type and the \emph{self-linking number}.  The self-linking number of $\bar\beta$ is \[ sl(\bar\beta) = a(\beta) - n \]  where $a(\beta)$ is the sum of the exponents of $\beta$.  An invariant of transverse links is called \emph{effective} if it separates transverse links in the same smooth isotopy class with the same self-linking number.  For more on the connection between transverse links and braids, see \cite{Etnyre}.

Now let $\beta$ be an $n$-strand braid whose closure $\bar\beta$ is the transverse link $L$.  Let $\diagram$ be a diagram for $\bar\beta$.  Recall that $\psi(\diagram)$ is defined as the generator in the braidlike resolution with only $v_-$ labels.  An easy calculation shows that $q(\psi(\diagram)) = sl(L)$.  Suppose that $\beta'$ is another braid whose closure is transversely isotopic to $L$.  Then for any sequence of conjugations and positive (de)stabilizations that transforms $\beta$ into $\beta'$, the naturally induced map $\CKh(\bar\beta) \to \CKh(\bar\beta')$ carries $\psi(\bar\beta)$ to $\psi(\bar\beta')$.  Thus $\psi(L) \in \CKh(L)$ is well-defined.  Note that while the $k$-grading of $\psi(L)$ is not well-defined (it is decreased by positive stabilization, see Lemma \ref{posstab}), the element $\psi(\diagram)$ generates $\filt_{-n}$, the lowest non-trivial level of the $k$-filtration.  It is easy to show that $\psi(L)$ is a cycle.

It is not known if $\psi(L)$ is effective.  Plamenevskaya showed that $[\psi(\bar\beta)] \neq 0$ if $\beta$ is negatively destabilizable, if $\beta$ can be written with $\sigma_i^{-1}$ but not $\sigma_i$ \cite{plamenevskaya2004transverse}, or if $\beta$ is non-right-veering (with Baldwin, \cite{baldwin2012categorified}).  On the other hand, if $\beta$ is quasi-positive then $[\psi(\beta)] \neq 0$ \cite{plamenevskaya2004transverse}.

\section{Definition and invariance of $\kappa$} \label{section:definitionandinvariance}

Let $B_n$ be the braid group on $n$ strands and let $\beta \in B_n$ be a braid with transverse element $\psi(\bar\beta)$.  The $k$-filtration on $\CKh(\bar\beta)$ has the form
\[0 \subset \filt_{-n} \subset \filt_{2-n} \subset \cdots \subset \filt_{n-2} \subset \filt_n = \CKh(\bar{\beta}) \] where $\filt_{-n}$ is generated by $\psi(\bar\beta)$, so $\psi(\bar{\beta}) \in \filt_i$ for $i \geq -n$.  We restate Definition \ref{def:kappa}:

\begin{Def*} Let $\beta \in B_n$ and suppose that $\psi(\bar\beta)$ is a boundary in $\CKh(\bar\beta)$.  Define \[ \kappa(\beta) = n + \min \{i :  [\psi(\bar\beta)] = 0 \in H(\filt_i)\}. \]  If $\psi(\bar\beta)$ is not a boundary, then define $\kappa(\beta) = \infty$.\end{Def*}

We will say that $y \in \CKh(\bar\beta)$ \emph{realizes $\kappa(\beta)$} if $dy = \psi(\bar\beta)$ and $k(y) = \kappa(\beta) - n$.  Note that $\kappa$ is always even and that $2 \leq \kappa(\beta) \leq 2n$.  The only element with $k$-grading $n$ is the all $v_+$ labeling of the braidlike resolution, so in fact $\kappa(\beta) \leq 2(n-1)$.  We now show that $\kappa$ is a well-defined function on $B_n$.  First, an algebraic lemma.

\begin{Lem}\label{lem:algebra} Let $(X, d, \filt)$ and $(X', d', \filt')$ be complexes with bounded filtrations, and suppose that $f: X \to X'$ is a filtered chain map.  For any non-zero cycle $x \in X$, define $\kappa(x) = \min\{i : [x] =0 \in H_*(\filt_i)\}$ or $\kappa(x) = \infty$ if $x$ is not a boundary.  Define $\kappa'$ analogously on $X'$.  Suppose that $f(x) = y \neq 0$.  Then $\kappa(x) \geq \kappa'(y)$.  If there is a filtered chain map $g: X' \to X$ with $g(y) = x$, then $\kappa(x) = \kappa'(y)$.\end{Lem}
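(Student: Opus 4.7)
The plan is to unwind the definition of $\kappa$ and exploit the two defining properties of a filtered chain map: it commutes with the differential and it preserves (or lowers) filtration level. Everything reduces to the observation that a filtered chain map sends a boundary in $\filt_i$ to a boundary in $\filt'_i$.

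First I would prove the inequality $\kappa(x) \geq \kappa'(y)$. The case $\kappa(x) = \infty$ is vacuous, so assume $\kappa(x) = i < \infty$. By definition there exists $z \in \filt_i$ with $dz = x$. Applying $f$, which commutes with the differential, gives $d'f(z) = f(dz) = f(x) = y$. Because $f$ is filtered we have $f(z) \in \filt'_i$, so $y$ is a boundary in $\filt'_i$ and hence $\kappa'(y) \leq i = \kappa(x)$.

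For the second statement, suppose a filtered chain map $g: X' \to X$ with $g(y) = x$ exists. Running the same argument with the roles of $(f,x,y)$ replaced by $(g,y,x)$ yields $\kappa'(y) \geq \kappa(x)$. Combining with the first inequality gives $\kappa(x) = \kappa'(y)$, including the case where one (and hence both) values are $\infty$: if $y$ were a boundary $y = d'w$ at some level, then $x = g(y) = d\,g(w)$ would also be a boundary, contradicting $\kappa(x) = \infty$.

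There is no real obstacle here; the proof is essentially a formality once one verifies that ``filtered chain map'' means exactly what is needed. The only step that deserves a small amount of care is the treatment of the $\infty$ case, since the statement is written uniformly for boundaries and non-boundaries; but in both directions this follows immediately from the contrapositive of the boundary-to-boundary property already used in the finite case.
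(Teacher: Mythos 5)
Your proof is correct and follows essentially the same argument as the paper's: push a bounding chain in $\filt_{\kappa(x)}$ through $f$ using that $f$ commutes with the differential and is filtered, then apply the symmetric argument with $g$ for the equality. Your extra sentence handling the $\infty$ case carefully is a minor refinement of what the paper treats implicitly.
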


\begin{proof} Chain maps carry cycles to cycles, so if $\kappa(x)$ is defined then so is $\kappa'(y)$.  There is nothing left to prove if $\kappa(x) = \infty$, so suppose that $\kappa(x)$ is finite.  Then there is some $w \in \filt_{\kappa(x)}$ so that $dw = x$, and $(f \circ d)(w) = y = (d \circ f)(w)$.  As $f$ is filtered, $f(w) \in \filt'_{\kappa(x)}$, so $\kappa'(y) \leq \kappa(x)$.  If there is a filtered chain map $g$ with $g(y) = x$, then the opposite inequality shows that $\kappa(x) = \kappa'(y)$.
\end{proof}

The $\kappa$ of Lemma \ref{lem:algebra} differs from that of Definition \ref{def:kappa} in that the latter is normalized using the braid index, but the Lemma clearly still applies to the Definition.

\begin{Prop} Suppose that $\beta$ and $\beta'$ are words in the Artin generators so that $\beta = \beta'$ in $B_n$.  Then $\kappa(\beta) = \kappa(\beta')$.  \end{Prop}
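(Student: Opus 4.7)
The plan is to reduce the proposition to an application of Lemma \ref{lem:algebra}. Two braid words $\beta$ and $\beta'$ with $\beta = \beta'$ in $B_n$ are related by a finite sequence of moves of the following types: (i) far commutations $\sigma_i \sigma_j = \sigma_j \sigma_i$ for $|i - j| \geq 2$, (ii) three-strand relations $\sigma_i \sigma_{i+1} \sigma_i = \sigma_{i+1} \sigma_i \sigma_{i+1}$, and (iii) insertion or deletion of cancelling pairs $\sigma_i \sigma_i^{-1}$. On the level of diagrams, these correspond respectively to planar isotopy, Reidemeister III moves, and Reidemeister II moves; crucially, no Reidemeister I moves are required, so the braid index $n$ is unchanged and the normalizing constant in Definition \ref{def:kappa} is the same for $\beta$ and $\beta'$.

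The first step is to recall that each Reidemeister II or III move in the plane induces a chain homotopy equivalence on the Khovanov chain complex via the standard Bar-Natan cobordism maps, and that each such map has a filtered inverse. The second, key, step is to verify that these maps are filtered with respect to the $k$-grading. This follows because any Reidemeister II or III move can be performed inside a disk $D$ in $\mathbb{R}^2$ that is disjoint from the reference arc $\pi(\gamma)$. Since every resolution circle that is involved in the local cobordism is contained in $D$, each such circle is trivial (null-homotopic in the annulus), and so the local saddle, birth, and death maps neither create nor destroy non-trivial circles and do not change the $k$-grading of any generator. The induced chain map is thus the identity on the global ``non-local'' factor tensored with a $k$-grading preserving local map, and is in particular filtered.

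The third step is to observe that these filtered maps send $\psi(\bar\beta)$ to $\psi(\bar{\beta'})$. Each diagram has a unique braidlike resolution, and that resolution is carried to the corresponding braidlike resolution of the new diagram by the isotopy or local Reidemeister move. The all-$v_-$ labeling on the source is sent to the all-$v_-$ labeling on the target since the local map in the braidlike resolution is either the identity (for planar isotopy and Reidemeister III, where the braidlike resolution is unchanged as an abstract one-manifold) or the composition of a birth and a merge on trivial circles sending $v_-$ to $v_-$ (for Reidemeister II). Composing these moves produces a filtered chain map $f : \CKh(\bar\beta) \to \CKh(\bar{\beta'})$ with $f(\psi(\bar\beta)) = \psi(\bar{\beta'})$, and the reverse sequence produces a filtered chain map $g$ in the opposite direction with $g(\psi(\bar{\beta'})) = \psi(\bar\beta)$. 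Applying Lemma \ref{lem:algebra} to $f$ and $g$ (and the matching braid-index normalization) yields $\kappa(\beta) = \kappa(\beta')$.

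The main obstacle is the second step: justifying that the local cobordism maps associated to Reidemeister II and III are $k$-filtered. Once one picks a representative for each move whose supporting disk misses $\pi(\gamma)$ (using the flexibility to isotope $\gamma$ as needed), the statement is local and reduces to checking that a saddle cobordism between trivial circles respects the $k$-grading, which follows from Roberts's description of the $k$-grading and the fact that Khovanov's differential is non-increasing in $k$. Everything else is bookkeeping that the distinguished cycle $\psi$ tracks through each elementary move.
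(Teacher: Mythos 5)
Your overall strategy is the same as the paper's: reduce to braidlike Reidemeister II and III moves (no Reidemeister I, so the braid index and the normalization are untouched), check that the induced Bar-Natan maps carry $\psi$ to $\psi$, show they are $k$-filtered, and apply Lemma \ref{lem:algebra} in both directions. However, the justification you give for the key step --- filteredness --- contains a genuine error. You claim that ``every resolution circle that is involved in the local cobordism is contained in $D$'' and is therefore trivial, so that the local map preserves the $k$-grading of every generator. That premise is false: only arcs of the affected circles lie in the supporting disk $D$; the circles themselves run through $D$ and out again, and in a braid closure they typically wind around the braid axis. In the most relevant case, the braidlike resolution containing $\psi(\bar\beta)$, \emph{every} circle is non-trivial. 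The Reidemeister II and III maps are not built only from identities and births/deaths of small circles: as a direct sum decomposition shows, they also contain compositions of saddles with cups and caps, and those saddles merge the small trivial circle with (or split it off from) circles that may be non-trivial. Such a saddle does \emph{not} preserve the $k$-grading --- e.g.\ merging a trivial circle labeled $v_-$ into a non-trivial circle labeled $v_+$ produces $v_-$ on a non-trivial circle, dropping $k$ by $2$ --- so your claim that the map is ``the identity on the global factor tensored with a $k$-grading preserving local map'' is not correct as stated.

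The conclusion you need is weaker and is true: the maps are filtered (non-increasing in $k$), which is all Lemma \ref{lem:algebra} requires. The correct justification, and the one the paper uses, is componentwise: identity components are filtered; cups and caps that do not cross the braid axis involve only a trivial circle and so do not change $k$; and saddle components are filtered because a saddle (between circles of any type, trivial or not) may be viewed as a component of the Khovanov differential of some annular link, which Roberts showed is non-increasing in $k$ (or one checks the four label cases directly). Your closing sentence gestures at this, but you restrict it to ``a saddle cobordism between trivial circles,'' which again presupposes the false locality claim; the case you actually need is a saddle involving a non-trivial circle. With that repair the argument goes through exactly as in the paper.
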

\begin{proof}
It will suffice to show that $\kappa(\beta)$ is invariant under Reidemeister 2 and Reidemeister 3 moves which do not cross the braid axis.  These moves induce natural maps on the Khovanov chain complex which carry $\psi(\beta)$ to $\psi(\beta')$, see \cite{plamenevskaya2004transverse}.  For a digestible summary of these maps, see \cite{bar2005khovanov}.  If these maps are filtered, then Lemma \ref{lem:algebra} completes the proof. 

The map induced by Reidemeister 2 (and its inverse) is a direct sum of identity maps and compositions of saddles with cups and caps.  The saddles, cups, and caps do not cross the braid axis.  Certainly the identity map is filtered.  One may check directly that saddle maps are filtered; alternatively, observe that a saddle may be viewed as a component of the Khovanov differential of some annular link and so it must be filtered.  Cups and caps that do not cross the braid axis cannot change the $k$-grading.  Thus the Reidemeister 2 map is filtered.  An identical analysis shows that the Reidemeister 3 maps are filtered.
\end{proof}

Considering braids instead of their closures, we obtain the following.
\begin{Prop} 
$\kappa$ is an invariant of conjugacy classes in $B_n$.
\end{Prop}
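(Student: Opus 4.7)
The plan is to reduce to the previous proposition by exhibiting conjugation as an annular isotopy that is decomposable into the same sort of Reidemeister 2 and 3 moves already analyzed. Writing $\alpha = \sigma_{i_1}^{\epsilon_1} \cdots \sigma_{i_k}^{\epsilon_k}$ and proceeding by induction on $k$, it is enough to show that $\kappa(\beta) = \kappa(\sigma_i^{\pm 1}\beta\sigma_i^{\mp 1})$ for every Artin generator $\sigma_i$.

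To realize a single cyclic conjugation geometrically, take a diagram for $\overline{\sigma_i \beta \sigma_i^{-1}}$ drawn in $A \times I$ with the closure arcs on the outside of the braid. The $\sigma_i^{-1}$ appearing at the bottom of the word can be slid up along the closure arcs until it sits just below the $\sigma_i$ at the top of the word, at which point the two cancel via a Reidemeister 2 move. The sliding takes place in a neighborhood of the closure arcs and decomposes into planar isotopy (which does not change the $k$-grading of any canonical generator and so induces an identity map) together with Reidemeister 2 and 3 moves inside small balls in $A \times I$. Crucially, the entire isotopy can be arranged to avoid the braid axis (the core of the inner boundary of $A \times I$): the closure arcs and the crossing being pushed lie on the outside of the annular region, so no local model is forced to cross the braid axis. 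Hence every move in the decomposition is one of the annular Reidemeister 2 or 3 moves already treated in the previous proposition, and each induces a filtered chain map on $\CKh$.

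It remains to check that the composed map carries $\psi(\overline{\sigma_i \beta \sigma_i^{-1}})$ to $\psi(\overline{\beta})$. The braidlike resolution is preserved under each local move in the decomposition: planar isotopies act trivially, Reidemeister 2 and 3 maps identify the braidlike resolutions of the before and after diagrams, and the all-$v_-$ labeling is sent to the all-$v_-$ labeling in each case (as recorded in the previous proposition). Applying the same argument with $\sigma_i$ and $\sigma_i^{-1}$ swapped produces a filtered chain map in the opposite direction also preserving $\psi$, so Lemma \ref{lem:algebra} yields $\kappa(\beta) = \kappa(\sigma_i \beta \sigma_i^{-1})$, and the induction concludes the proof. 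The main obstacle, minor but worth being careful about, is verifying that no step of the sliding isotopy requires crossing the braid axis; once this is confirmed, everything reduces cleanly to the previous proposition together with Lemma \ref{lem:algebra}.
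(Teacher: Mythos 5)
Your proposal is correct and takes essentially the same route as the paper: the paper deduces conjugacy invariance directly from the preceding proposition by viewing conjugation as an isotopy of the closure in the annulus built from Reidemeister moves that avoid the braid axis, whose induced maps are filtered and preserve $\psi$, with Lemma \ref{lem:algebra} finishing the argument. You have merely spelled out the details (sliding the crossing along the closure arcs and cancelling by a Reidemeister 2 move) that the paper leaves implicit.
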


A program to compute $\kappa$ is available at \url{www2.bc.edu/adam-r-saltz/kappa.html}.

\section{Examples and Properties of $\kappa$}\label{section:properties}

\subsection{Main example}

An immediate first question is whether elements in $k$-grading $-n+2$ always suffice to kill $\psi(\overline\beta)$ whenever $[\psi(\overline\beta)] = 0$, that is, whether $\kappa = 2$ for all braids with $[\psi(\overline\beta)] = 0$.  Proposition \ref{ngexample}, using examples from Theorem 1.1 in \cite{khandhawit2010family}, shows that this is false.  We restate it here for convenience:

\begin{Prop*} For any $a,b \in \{0,1,2\}$, the pair of closed $4$-braids
\[ A(a,b) = \sigma_{3}\sigma_{2}^{-2}\sigma_{3}^{2a+2}\sigma_{2}\sigma_{3}^{-1}\sigma_{1}^{-1}\sigma_{2}\sigma_{1}^{2b+2} \,\,\,\,\,\text{and}\,\,\, \]
\[B(a,b) = \sigma_{3}\sigma_{2}^{-2}\sigma_{3}^{2a+2}\sigma_{2}\sigma_{3}^{-1}\sigma_{1}^{2b+2}\sigma_{2}\sigma_{1}^{-1},\]
related by a negative flype, can be distinguished by $\kappa$: indeed, $\kappa(A(a,b0)) = 4$ and $\kappa(B(a,b)) = 2$. 
\end{Prop*}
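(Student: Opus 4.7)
The plan is to compute $\kappa$ directly from the annular-filtered Khovanov chain complex for each of the nine pairs $(a,b) \in \{0,1,2\}^2$. Since all braids are $4$-stranded, the value of $\kappa$, when finite, is constrained to $\{2,4,6\}$. Establishing the two equalities therefore reduces to (i) exhibiting an explicit bounding chain at the appropriate filtration level for each braid, and (ii) for the $A$-family only, certifying that no bounding chain exists at filtration level $-n+2 = -2$.

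The upper bounds $\kappa(B(a,b)) \leq 2$ and $\kappa(A(a,b)) \leq 4$ are constructive. For each braid $\bar\beta$, I would enumerate canonical generators of $\CKh(\bar\beta)$ in the bigrading of $\psi(\bar\beta)$ shifted by one in $h$, restrict to those with $k$-grading $\leq -2$ (respectively $\leq 0$), and solve the resulting linear system $dy = \psi(\bar\beta)$ over $\bz/2\bz$. Each word has roughly ten crossings, so the relevant chain groups are of manageable size, and the output is an explicit chain $y$ that realizes the stated upper bound.

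The lower bound $\kappa(A(a,b)) \geq 4$ is the heart of the proposition. Here one must show that $\psi(\overline{A(a,b)})$ fails to be a boundary in the subcomplex $\filt_{-2}(\overline{A(a,b)})$. Concretely, form the matrix of the Khovanov differential restricted to chains of $k$-grading $\leq -2$ in the appropriate bidegree, and verify that $\psi(\overline{A(a,b)})$ does not lie in its column span. This is again a finite linear-algebra computation over $\bz/2\bz$, and it is the step for which no closed-form certificate is available.

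The principal obstacle is that there appears to be no clean conceptual reason to distinguish $\kappa(A(a,b))$ from $\kappa(B(a,b))$: a negative flype does not induce a filtered chain map carrying $\psi$ to $\psi$, for otherwise $\kappa$ would be flype-invariant and could not separate the two families. Consequently the proof must be computational, performed by the program referenced at the URL above. A self-contained write-up would record explicit primitives $y_A \in \CKh(\overline{A(a,b)})$ with $k(y_A) = 0$ and $y_B \in \CKh(\overline{B(a,b)})$ with $k(y_B) = -2$ as certificates for the upper bounds, together with the matrix computation that certifies the lower bound. Some uniformity in $(a,b)$ is plausible, since the exponents $2a+2$ and $2b+2$ correspond to positive full twists that interact predictably with $\psi$ and the annular filtration; but even without exploiting this reduction, the nine individual calculations are entirely routine.
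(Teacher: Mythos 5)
Your proposal matches the paper's approach: the paper's entire proof is ``By computation,'' carried out with the program referenced in Section \ref{section:definitionandinvariance}, i.e.\ exactly the finite linear-algebra computations over $\bz/2\bz$ you describe (explicit bounding chains for the upper bounds, and verification that $\psi(\overline{A(a,b)})$ is not in the image of the differential restricted to $\filt_{-2}$ for the lower bound). Your write-up is in fact more explicit than the paper about what the computation certifies.
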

\begin{proof} By computation.
\end{proof}

We do not know if this relation holds for all $a, b \in \bz_{\geq 0}$. Recall that since $\overline A(a,b)$ and $\overline B(a,b)$ are in the same isotopy class (as they are related by a flype), they have isomorphic Khovanov homologies.  However, annular Khovanov homology can differentiate them (see \cite{hubbard2015sutured}) for $a, b \in \{0,1,2\}$.

\subsection{Negative Stabilization}

\begin{Prop}\label{neg} If a closed $n$-braid $\beta$ is a negative stabilization of another braid, then $\kappa(\beta) = 2$.
\end{Prop}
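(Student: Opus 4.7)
The plan is to produce an explicit primitive $y$ for $\psi(\bar\beta)$ living in filtration level $-n+2$; this yields $\kappa(\beta) \leq 2$, and combined with the lower bound $\kappa(\beta) \geq 2$ noted after Definition \ref{def:kappa}, gives the equality. Write $\beta = \beta' \sigma_{n-1}^{-1}$ with $\beta' \in B_{n-1}$, and let $c$ denote the stabilizing negative crossing. Let $I_0$ denote the braidlike resolution, at which $\psi(\bar\beta)$ is the all-$v_-$ labeling of the $n$ non-trivial circles $C_1, \ldots, C_n$.

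Let $I_c$ be the cube vertex that differs from $I_0$ only at $c$, where we take the non-oriented smoothing. First, I would verify that $I_c$ consists of the same circles $C_1, \ldots, C_{n-2}$ together with a single merged circle $\widetilde C$ formed from $C_{n-1}$ and $C_n$: this follows because every crossing other than $c$ is oriented-resolved, so no other gluings occur. Next I would check that $\widetilde C$ is trivial; choosing the cutting arc $\gamma$ away from $c$, the mod-2 intersection of $\widetilde C$ with $\pi(\gamma)$ agrees with that of $C_{n-1} \sqcup C_n$, which is $2 \equiv 0 \pmod 2$. I then define $y \in \CKh_{I_c}(\bar\beta)$ to be the canonical generator labeling each of $C_1, \ldots, C_{n-2}$ and $\widetilde C$ by $v_-$, so $k(y) = -(n-2) + 0 = -n + 2$.

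To finish, I would compute $dy$ by enumerating the outgoing edges of $I_c$ in the Khovanov cube. The edge at $c$ goes to $I_0$ and is the comultiplication $\Delta$ splitting $\widetilde C$ into $C_{n-1}$ and $C_n$; since $\Delta(v_-) = v_- \otimes v_-$, it outputs precisely $\psi(\bar\beta)$. The remaining outgoing edges correspond to positive crossings $c'$ of $\beta'$ (the other negative crossings are already in their $1$-resolution and give no outgoing edge), and at each such $c'$ the two involved strands lie in two distinct circles of $I_c$, so the associated saddle is a merge $m$, and $m(v_- \otimes v_-) = 0$ kills each such contribution. Summing gives $dy = \psi(\bar\beta)$.

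The only real content is the topological identification of the merged circle $\widetilde C$ as trivial, since once that is in hand the $k$-grading of $y$ and the Frobenius-algebra cancellations on the remaining edges are immediate. I expect this local check, together with making sure no outgoing edges are inadvertently missed in the enumeration, to be the main place where care is required.
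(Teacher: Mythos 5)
Your proof is correct and follows essentially the same route as the paper: the paper also takes the resolution that is braidlike except at the stabilizing negative crossing (0-resolved there), labels every circle $v_-$, and observes this element has $k$-grading $-n+2$ and bounds $\psi$, citing Plamenevskaya's Theorem 3 for the computation $dy=\psi$ that you carry out explicitly. Your added details (triviality of the merged circle, enumeration of outgoing edges) are just the spelled-out version of that same argument.
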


\begin{proof} In Theorem 3 of \cite{plamenevskaya2004transverse}, Plamenevskaya constructs an element $y \in CKh(\beta)$ such that $d y = \psi(\beta)$ as follows: consider the resolution formed from taking the $0$-resolution of the negative crossing from the negative stabilization, the $1$-resolution for all other negative crossings, and the $0$-resolution for all positive crossings.  The element $y$ is obtained by assigning each circle in this resolution $v_{-}$.  It is clear that $y$ has $k$-grading $-n+2$.
\end{proof}

\subsection{Positive Stabilization}\label{posstab}

Define an $\textit{arc}$ of a closed braid diagram to be a segment of the link that goes from one crossing to another crossing without traversing over or under any other crossings.  An $\textit{innermost arc}$ is one for which we can draw a straight line from the braid axis to any point on the arc without crossing any other arcs.  An $\textit{innermost point}$ is a point lying on an innermost arc.

Given an $n$-strand braid $\beta$, we define $S^{p}\beta$ to be $\beta$ positively stabilized once at an innermost point $p$.  That is: insert $\sigma_{n}$ at the point $p$ on the diagram.   

\begin{Prop} $\kappa(\beta)$ is not a transverse invariant.
\end{Prop}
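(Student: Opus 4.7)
The plan is to exhibit an explicit braid $\beta$ such that $\kappa(S^p\beta) \neq \kappa(\beta)$; since $\beta$ and $S^p\beta$ have transversely isotopic closures by the Transverse Markov Theorem, this immediately shows $\kappa$ is not a transverse invariant.

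First I would work out the effect of positive stabilization on $\psi$ and on candidate bounding chains. Adding $\sigma_n$ to $\beta$ at an innermost point introduces one new strand whose closure contributes an additional non-trivial circle to the braidlike resolution of $\overline{S^p\beta}$. Consequently $\psi(\overline{S^p\beta}) = \psi(\bar\beta) \otimes v_-$, with the new tensor factor on this new non-trivial circle, and in particular $k(\psi(\overline{S^p\beta})) = -(n+1)$, one lower than $k(\psi(\bar\beta))$. This drop of one in the $k$-grading of $\psi$ is exactly the obstruction to $\kappa$ being stabilization-invariant: the normalization by the braid index in Definition \ref{def:kappa} only exactly compensates if a bounding chain for $\psi(\overline{S^p\beta})$ can be found whose $k$-grading is also one lower than the optimal bounding chain for $\psi(\bar\beta)$.

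Given any chain $y$ realizing $\kappa(\beta)$, the natural candidate lift is $y \otimes v_-$. A direct computation yields
\[
d(y \otimes v_-) \;=\; (d_\beta y) \otimes v_- \;+\; d_{\sigma_n}(y \otimes v_-) \;=\; \psi(\overline{S^p\beta}) \;+\; d_{\sigma_n}(y \otimes v_-),
\]
where $d_{\sigma_n}$ is the differential component from the stabilization crossing; on labels it merges the new circle with the adjacent circle in $y$. When $d_{\sigma_n}(y \otimes v_-) = 0$ (for instance when every term of $y$ carries $v_-$ on the circle adjacent to the new strand, since $m(v_- \otimes v_-) = 0$), the naive lift bounds $\psi(\overline{S^p\beta})$ and yields $\kappa(S^p\beta) \leq \kappa(\beta)$. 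When this correction does not vanish, one must add further terms to kill it, and the $k$-grading of the resulting chain can be strictly higher, permitting $\kappa(S^p\beta) > \kappa(\beta)$.

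To finish, I would produce a specific braid $\beta$ realizing a strict change (the surrounding text suggests the jump is by $2$), for example by searching among small braids with the computer program referenced earlier in the paper, or by hand-verifying a minimal-size candidate drawn from the families already present in the paper (such as $B(a,b)$ from Proposition \ref{ngexample}, where $\kappa(\beta) = 2$ is already known). The main obstacle is the lower bound: one must rule out \emph{every} bounding chain for $\psi(\overline{S^p\beta})$ of $k$-grading $\leq k(y)-1$. In principle this is a finite linear-algebra problem over $\bz/2\bz$, but beyond very small braids it is not tractable by hand and really does require the computational tool. Once such a $\beta$ is in hand, $\beta$ and $S^p\beta$ give transversely isotopic closures with distinct $\kappa$-values, and the proposition follows.
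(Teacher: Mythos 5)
Your proposal is correct and follows essentially the same route as the paper: the paper also proves non-invariance by exhibiting a concrete stabilization example verified with the computer program, namely $B(0,0)$ from Proposition \ref{ngexample}, for which $\kappa(B(0,0))=2$ while $\kappa(S^{p}B(0,0))=4$ for every choice of innermost point $p$. Your candidate family and your analysis of why the naive lift $y\otimes v_-$ can fail (forcing a higher $k$-grading bounding chain) match the paper's reasoning; the only missing piece is the actual computation, which the paper likewise delegates to the program.
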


\begin{proof}

This is due to the fact that the chain map corresponding to positive stabilization is not filtered (see Proposition \ref{boundspos}). We have a concrete example: consider the braid $B(0,0)$ from Proposition \ref{ngexample}. By computation, $\kappa(B(0,0))=2$ and $\kappa(S^{p}B(0,0)) = 4$ for all choices of innermost points $p$. 
\end{proof}

We note here that we can define a transverse invariant using $\kappa$, though it is not clear how to compute it unless the transverse link is known to be represented by a braid with $\kappa = 2$.

\begin{Def} For an $n$-braid $\beta$, define $\kappa_{min}(\beta)$ to be the minimum $\kappa(K)$ over all transverse representatives $K$ of $\beta$.  It is a transverse invariant.
\end{Def}

We can give bounds on the behavior of $\kappa$ under positive stabilization:

\begin{Prop}\label{boundspos} $\kappa(\beta) \leq \kappa(S^{p}\beta) \leq \kappa(\beta) + 2$. 
\end{Prop}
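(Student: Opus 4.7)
The strategy is to produce filtered chain maps
\[ g : \CKh(\bar\beta) \to \CKh(\bar{S^p\beta}) \quad\text{and}\quad h : \CKh(\bar{S^p\beta}) \to \CKh(\bar\beta), \]
each sending $\psi$ to $\psi$ and each shifting the $k$-filtration by at most $+1$. Combined with the braid-index normalization in the definition of $\kappa$, the (shifted) Lemma~\ref{lem:algebra} then yields $\kappa(S^p\beta) \leq \kappa(\beta) + 2$ from $g$ and $\kappa(\beta) \leq \kappa(S^p\beta)$ from $h$.

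To construct these maps I decompose $\CKh(\bar{S^p\beta})$ along the new crossing $\sigma_n$ as a mapping cone $\CKh_0 \xrightarrow{\alpha} \CKh_1$. The $0$-resolution of $\sigma_n$ splits off a small stabilizing loop $L$ which wraps once around the braid axis, so $L$ is annular-nontrivial and $\CKh_0 \cong \CKh(\bar\beta) \otimes V_L$; the $1$-resolution is topologically $\bar\beta$, so $\CKh_1 \cong \CKh(\bar\beta)$. The saddle $\alpha$ merges $L$ into the component $C_A$ of $\bar\beta$ containing $p$, so $\alpha(y \otimes v_+) = y$ and $\alpha(y \otimes v_-) = \tilde y$, where $\tilde y$ denotes multiplication of the $C_A$-tensor factor of $y$ by $v_-$. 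The cube relation $d^2 = 0$ forces $\alpha$ to be a chain map, which is exactly the commutation $d_\beta \tilde y = \widetilde{d_\beta y}$.

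For the upper bound I set $g(y) = \tilde y \otimes v_+ + y \otimes v_- \in \CKh_0$. A short calculation using the commutation of $\tilde{\phantom{y}}$ with $d_\beta$ shows $g$ is a chain map, and $g(\psi(\bar\beta)) = \psi(\bar{S^p\beta})$ since $\widetilde{\psi(\bar\beta)} = 0$ (every component of the braidlike resolution, including $C_A$, is labeled $v_-$). For the filtration, $k(y \otimes v_-) = k(y) - 1$ while $k(\tilde y \otimes v_+) = k(\tilde y) + 1 \leq k(y) + 1$, using that multiplication by $v_-$ on $C_A$ either decreases $k$ by $2$ (if $C_A$ is annular-nontrivial in a given term) or preserves $k$ (if trivial). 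Hence $g(\filt_i) \subseteq \filt_{i+1}$, and Lemma~\ref{lem:algebra} gives the upper bound.

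For the lower bound I define the projection $h(a_+ \otimes v_+ + a_- \otimes v_-,\, b) = a_-$. One checks directly that $h$ is a chain map and that $h(\psi(\bar{S^p\beta})) = h(\psi(\bar\beta) \otimes v_-, 0) = \psi(\bar\beta)$. An element $(a, b)$ of $\filt_i(\bar{S^p\beta})$ must in particular have $k(a_- \otimes v_-) = k(a_-) - 1 \leq i$, so $h(\filt_i) \subseteq \filt_{i+1}$, and Lemma~\ref{lem:algebra} gives the lower bound. The main obstacle in executing this plan is the $k$-filtration bookkeeping: one must carefully separate the contribution of the (always non-trivial) stabilizing loop $L$ from the resolution-dependent contribution of $C_A$ in order to obtain the sharp shift of $+1$ rather than $+2$, which is precisely the room needed to match the $+2$ in the statement.
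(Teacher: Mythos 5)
Your proposal is correct and follows essentially the same route as the paper: your maps $g$ and $h$ are exactly the paper's stabilization maps $\rho$ and $\phi$ (the two Reidemeister 1 chain maps, with $g$ the comultiplication-type map giving the upper bound and $h$ the projection killing the $1$-resolved part giving the lower bound), and the filtration bookkeeping --- a shift of at most $+1$ offset against the change in braid index --- matches the paper's argument.
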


\begin{figure}[ht!]
\includegraphics[scale=0.8]{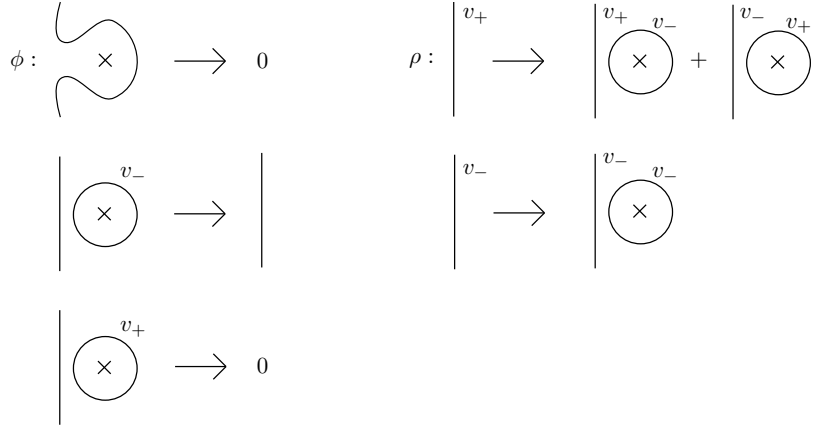}
\caption{Chain maps for positive stabilization}
\label{posstab_fig}
\end{figure}

\begin{proof} 
$S^{p}\beta$ has a positive crossing at $p$, and for an $n$-strand braid $\beta$ we refer to this crossing as $\sigma_{n,p}$.  Suppose that $\sigma_{n,p}$ appears last in the crossing ordering. We show the first inequality.  As described in \cite{bar2005khovanov}, there is a chain map $\phi: CKh(S^{p}\beta) \to CKh(\beta)$ sending all elements in resolutions of $S^{p}\beta$ where $\sigma_{n,p}$ is $1$-resolved to $0$ and satisfying 
\[ \phi(z \otimes v_{-}) = z \]
\[ \phi(z \otimes v_{+}) = 0 \] 
for elements in resolutions where $\sigma_{n,p}$ is $0$-resolved (see Figure \ref{posstab_fig}).  Consider an element $y \in CKh(S^{p}\beta)$ realizing $\kappa(S^{p}\beta)$.  The element $y$ takes the form $z_{1} \otimes v_{-} + z_{2} \otimes v_{+} + z_{3}$.  So
\[ d(\phi(y)) = d(z_{1}) = \phi(d y) = \phi(\psi(S^{p}\beta)) = \psi(\beta) \]
Hence $z_{1}$ kills $\psi(\beta)$, and so we have 
\[ \kappa(S^{p}\beta) = \max k(z_{1} \otimes v_{-}, z_{2} \otimes v_{+}, z_{3}) + n + 1 \] \[\geq k(z_{1} \otimes v_{-}) + n + 1 = k(z_{1}) + n \geq \kappa(\beta) \]

As described in \cite{bar2005khovanov} (see also \cite{plamenevskaya2004transverse}), there is a chain map $\rho: CKh(\beta) \to CKh(S^{p}\beta)$ satisfying $\rho(\psi(\beta)) = \psi(S^{p}\beta)$.  It is given by
\[\rho(v_{-}) = v_{-} \otimes v_{-}\]
\[\rho(v_{+}) = v_{+} \otimes v_{-} + v_{-} \otimes v_{+}\]
Hence $\rho$ can either decrease $k$-grading by one or increase it by one, depending on whether the circles in question are trivial or non-trivial.  
Now, suppose we have an element $y \in CKh(\beta)$ realizing $\kappa(\beta)$: then $\rho(y)$ kills $\psi(S^{p}\beta)$.  The $k$-grading of $\rho(y)$ is at most $\kappa(\beta) -n + 1$.  Stabilization increases strand number by one, so $\kappa(S^{p}\beta)$ could at most be 
\[\kappa(\beta) -n + 1 + n + 1 = \kappa(\beta) + 2.\]
\end{proof}

\subsection{Other properties and consequences}

Propositions \ref{neg} and \ref{boundspos} immediately give us bounds for $\kappa$ of braids related by exchange moves and positive flypes: 

\begin{Prop} If two braids $\sigma$ and $\beta$ are related by a single exchange move or a single positive flype, then $|\kappa(\sigma) - \kappa(\beta)| \leq 2$.
\end{Prop}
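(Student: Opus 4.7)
The plan is to realize each of these moves as a single positive Markov stabilization, followed by a conjugation in the enlarged braid group, followed by a single positive Markov destabilization, and then to apply Proposition \ref{boundspos} together with the conjugation invariance of $\kappa$. Both the exchange move and the positive flype preserve the transverse isotopy class of the closure, and a classical result of Birman--Menasco (from their work toward the Markov theorem without stabilization) shows that each admits such a decomposition through $B_{n+1}$. This step is exactly the braid-theoretic input the proof requires, and it is where essentially all of the difficulty lives; the rest is formal.

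Assuming this decomposition, given $\sigma, \beta \in B_n$ related by an exchange move or a positive flype, there exist innermost points $p$ on the diagram of $\sigma$ and $q$ on the diagram of $\beta$ such that $S^{p}\sigma$ and $S^{q}\beta$ are conjugate in $B_{n+1}$. By the conjugation invariance of $\kappa$ already established in Section \ref{section:definitionandinvariance}, we then have $\kappa(S^{p}\sigma) = \kappa(S^{q}\beta)$.

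Next, applying Proposition \ref{boundspos} to each of the two stabilizations yields
\[ \kappa(\sigma) \;\leq\; \kappa(S^{p}\sigma) \;\leq\; \kappa(\sigma) + 2, \qquad \kappa(\beta) \;\leq\; \kappa(S^{q}\beta) \;\leq\; \kappa(\beta) + 2. \]
Using $\kappa(S^{p}\sigma) = \kappa(S^{q}\beta)$ gives $\kappa(\sigma) \leq \kappa(\beta) + 2$, and the symmetric substitution gives $\kappa(\beta) \leq \kappa(\sigma) + 2$. Combining these yields $|\kappa(\sigma) - \kappa(\beta)| \leq 2$ as desired.

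The only real obstacle is the classical decomposition of exchange moves and positive flypes as a single positive stabilization--destabilization pair joined by conjugation; once that is cited, the argument collapses to the two inequalities above. Note that Proposition \ref{neg} is not strictly required for this form of the argument, but it would enter naturally if one wished instead to realize the moves using a single negative stabilization--destabilization pair, in which case the intermediate braid has $\kappa = 2$ and one would need supplementary control on how much $\kappa$ can change under negative destabilization to extract the same bound; the positive-stabilization route is the cleanest.
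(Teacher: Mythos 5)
Your proof is correct and follows essentially the same route as the paper: the authors likewise cite the fact that an exchange move or positive flype decomposes into braid isotopies, one positive stabilization, and one positive destabilization, and then the bound follows from Proposition \ref{boundspos} together with conjugacy invariance, exactly as you spell out. The only difference is that you make the elementary chain of inequalities explicit where the paper leaves it implicit.
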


\begin{proof} Exchange moves and positive flypes can both be expressed as a composition of braid isotopies, one single positive stabilization, and one single positive destabilization (see for instance \cite{birman2000transversally}, \cite{lipshitz2013transverse}).
\end{proof}


\begin{Prop}\label{sigmanegative} Suppose a closed $n$-braid $\beta$ can be represented by a braid word containing a factor of $\sigma_{i}^{-1}$ but no $\sigma_{i}$'s for some $i = 1, \ldots, n$.  Then $\kappa(\beta) = 2$.
\end{Prop}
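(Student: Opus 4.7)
The plan is to mimic the bounding-chain construction from the proof of Proposition \ref{neg}. Choose one occurrence of $\sigma_i^{-1}$ in the braid word of $\beta$ and call that crossing $c$. Let $I$ be the resolution of $\bar\beta$ obtained by smoothing $c$ non-braidlike (horizontally) and every other crossing braidlike, and let $y \in \CKh_I(\bar\beta)$ be the element labeling every circle of $I$ by $v_-$. The strategy is to show that $k(y) = -n+2$ and $dy = \psi(\bar\beta)$; together with the lower bound $\kappa \geq 2$ noted after Definition \ref{def:kappa}, this forces $\kappa(\beta) = 2$.

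For the grading: in $I$, each strand $j \notin \{i, i+1\}$ closes into its own non-trivial circle. The non-braidlike smoothing at $c$ replaces the local crossing of strands $i$ and $i+1$ with a cup below and a cap above. Since both strands are oriented upward into that cup, the merged curve must traverse one of the two strand-circles against its orientation, so its signed winding about the braid axis is $0$; it is thus null-homotopic in the annulus and hence trivial. With $n-2$ non-trivial circles and one trivial circle, each labeled $v_-$, we obtain $k(y) = -(n-2) = -n+2$.

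To see $dy = \psi(\bar\beta)$, decompose $d$ by crossing. At $c$, flipping to the braidlike smoothing raises the homological grading and is the split map applied to the merged circle; since $\Delta(v_-) = v_- \otimes v_-$, this component is precisely the all-$v_-$ labeling of the braidlike resolution, namely $\psi(\bar\beta)$. At any other negative crossing, $I$ is already braidlike, so flipping would lower the homological grading and yields no contribution. At any positive crossing $\sigma_j$ (where $j \neq i$ by hypothesis), flipping to non-braidlike raises the homological grading and is a merge of two circles of $I$; these circles are distinct, because the merged trivial circle at $c$ only occupies strand positions $i$ and $i+1$, while $\sigma_j$ meets $\{j, j+1\}$ with $j \neq i$. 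Both labels are $v_-$ and $m(v_- \otimes v_-) = 0$, so every such component vanishes, and $dy = \psi(\bar\beta)$. The step needing most care is the orientation argument that makes the merged circle at $c$ trivial; everything else is bookkeeping in the spirit of Proposition \ref{neg}.
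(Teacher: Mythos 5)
Your construction is exactly the paper's: the same resolution (the chosen $\sigma_i^{-1}$ smoothed non-braidlike, all other crossings braidlike, i.e.\ $0$-resolved positive crossings and $1$-resolved other negative crossings) and the same all-$v_-$ element $y$ with $k(y)=-n+2$ and $dy=\psi(\bar\beta)$. Your crossing-by-crossing bookkeeping (positives are merges of distinct circles since $j\neq i$, other negatives are already $1$-resolved) is just a slight reorganization of the paper's observation that merges kill $y$ and the only possible splits lie in column $i$, so the proof is correct and essentially identical.
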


\begin{proof}
The argument we give here is very similar to arguments found in \cite{plamenevskaya2004transverse}. Consider the resolution formed from taking the $0$-resolution of one of the $\sigma_{i}^{-1}$'s, the $1$-resolution for all other negative crossings, and the $0$-resolution for all positive crossings.  We claim that assigning each circle in this resolution $v_{-}$ yields an element $y$ with $d y = \psi$ and $k(y) = -n+2$.  The differential $d$ on $y$ is the sum of all maps with $y$ as their initial end.  By our choice of resolution, any map corresponding to a merge map sends $y$ to $0$.  Hence $d$ is a sum of split maps.  Topologically, the only split maps that can start from this resolution are in the $i$'th column; however, there are only negative crossings in this column, and at this resolution they are all $1$-resolved except for the one that is $0$-resolved.  So the only contributor to $d y$ is the map resolving that crossing, sending $y$ to $\psi(\beta)$.  
\end{proof}

The following two definitions (with more detail) can be found in \cite{baldwin2012categorified}.  Let $D_{n}$ denote the standard unit disk with $n$ marked points $p_{1}, \ldots, p_{n}$ positioned along the real axis.

\begin{Def} An arc $\gamma: [0,1] \to D_{n}$ is admissible if it satisfies
\begin{enumerate}[(i)]
\item $\gamma$ is a smooth imbedding transverse to $\partial D_{n}$
\item $\gamma(0) = -1 \in \mathbb{C}$ and $\gamma(1) \in \{p_{1}, \ldots, p_{n}\}$
\item $\gamma(t) \in D_{n} - (\partial D_{n} \cup \{p_{1}, \ldots, p_{n}\})$ for all $t \in (0,1)$ and
\item $\frac{d\gamma}{dt} \neq 0$ for all $t \in (0,1)$.
\end{enumerate}
\end{Def}

\begin{Def} Let $\sigma \in B_{n}$. We say $\sigma$ is right-veering if for all admissible arcs $\gamma$, $\sigma(\gamma)$ is right of $\gamma$ when pulled tight.
\end{Def}

\begin{Cor}\label{rightveering} If an $n$-braid $\sigma$ is not right-veering, then $\kappa(\overline{\sigma}) = 2$. 
\end{Cor}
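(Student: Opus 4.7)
The plan is to reduce the corollary to Proposition \ref{sigmanegative} using the conjugacy invariance of $\kappa$ established earlier in Section \ref{section:definitionandinvariance}. The key combinatorial input I would invoke is the characterization (essentially due to Baldwin and Grigsby in \cite{baldwin2012categorified}, and underlying their proof that $[\psi]$ vanishes on non-right-veering braids) that an $n$-braid $\sigma$ is non-right-veering if and only if it is conjugate to a braid word $\sigma'$ containing a factor of $\sigma_i^{-1}$ but no factor of $\sigma_i$ for some $i \in \{1,\dots,n-1\}$. Given this, the argument is a one-liner: because $\kappa$ is a conjugacy invariant, $\kappa(\overline{\sigma}) = \kappa(\overline{\sigma'})$, and Proposition \ref{sigmanegative} applied to $\sigma'$ yields $\kappa(\overline{\sigma'}) = 2$.

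The main obstacle is that the characterization above may not appear in Baldwin--Grigsby in exactly the form I have stated, so some care is needed in citing it. If a direct citation is not available, I would instead unpack Baldwin--Grigsby's proof of the vanishing of $[\psi(\overline{\sigma})]$ and observe that their bounding element is built from a specific partial resolution: the distinguished $\sigma_i^{-1}$ (whose existence is guaranteed by the failure of right-veering after conjugation) is $0$-resolved, every other negative crossing is $1$-resolved, every positive crossing is $0$-resolved, and every circle of the resulting diagram is labeled $v_-$. This is exactly the shape of the bounding element used in Proposition \ref{sigmanegative}, and the same local calculation shows that its $k$-grading is $-n+2$, since labeling everything by $v_-$ minimizes $k$ among the available resolutions and the single $0$-resolved negative crossing in column $i$ forces $k$ to jump by exactly $2$ from the braidlike resolution value of $-n$.

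Thus in either route the proof concludes: there exists $y$ with $dy = \psi(\overline{\sigma})$ and $k(y) = -n + 2$, so by Definition \ref{def:kappa} we have $\kappa(\overline{\sigma}) \leq 2$, and the general lower bound $\kappa \geq 2$ noted after the definition gives equality. The only real subtlety lies in correctly invoking or extracting the non-right-veering-to-word-form passage from \cite{baldwin2012categorified}; everything else is immediate from the results already in this section.
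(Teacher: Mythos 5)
Your first route is exactly the paper's proof: the authors cite Proposition 3.1 of \cite{baldwin2012categorified} together with Proposition 6.2.7 of \cite{dehornoy2002braids} for the fact that a non-right-veering braid is conjugate to a word containing some $\sigma_i^{-1}$ but no $\sigma_i$, and then conclude by conjugacy invariance of $\kappa$ and Proposition \ref{sigmanegative}. Your fallback of unpacking the bounding element is unnecessary but consistent with the argument already given in Proposition \ref{sigmanegative}.
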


\begin{proof}
By Proposition 3.1 of \cite{baldwin2012categorified} and Proposition 6.2.7 of \cite{dehornoy2002braids}, $\sigma$ is conjugate to a braid that can be represented by a word containing at least a factor of $\sigma_{i}^{-1}$ but no $\sigma_{i}$'s for some $i = 1, \ldots, n$. The result follows by Proposition \ref{sigmanegative}.
\end{proof}

For a braid $\beta \in B_{n}$ we denote its mirror as $m(\beta) \in B_{n}$.

\begin{Cor}\label{word} If $\kappa(\overline{\sigma}) \neq 2$ and $\kappa(m(\overline{\sigma})) \neq 2$, then $\sigma = 1 \in B_{n}$.
\end{Cor}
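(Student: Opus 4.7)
The plan is to prove the contrapositive using Corollary \ref{rightveering}. Suppose $\kappa(\overline{\sigma}) \neq 2$ and $\kappa(m(\overline{\sigma})) \neq 2$; by the contrapositive of Corollary \ref{rightveering} applied to both $\sigma$ and $m(\sigma)$, both of these braids must be right-veering, so it suffices to show that these two conditions together force $\sigma = 1 \in B_n$.

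First I would identify ``$m(\sigma)$ is right-veering'' with ``$\sigma^{-1}$ is right-veering.'' By the characterization used in the proof of Corollary \ref{rightveering} (Proposition 3.1 of \cite{baldwin2012categorified} together with Proposition 6.2.7 of \cite{dehornoy2002braids}), a braid fails to be right-veering exactly when it is conjugate to a word containing some factor $\sigma_i^{-1}$ but no factor $\sigma_i$. The mirror map $m$ is an involution of $B_n$ sending $\sigma_i \mapsto \sigma_i^{-1}$, so $m(\sigma)$ fails to be right-veering if and only if $\sigma$ is conjugate to a word $\alpha$ containing some $\sigma_j$ but no $\sigma_j^{-1}$. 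Inverting $\alpha$ exhibits $\sigma^{-1}$ as conjugate to a word with some $\sigma_j^{-1}$ but no $\sigma_j$, which is precisely the failure condition for $\sigma^{-1}$. This gives the desired equivalence.

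The remaining ingredient is the classical fact that both $\sigma$ and $\sigma^{-1}$ right-veering forces $\sigma = 1$: for any admissible arc $\gamma$, right-veering of $\sigma$ gives $\sigma(\gamma) \geq \gamma$ in the ``to the right of'' partial order on arcs, while right-veering of $\sigma^{-1}$ gives $\sigma^{-1}(\gamma) \geq \gamma$, equivalently $\gamma \geq \sigma(\gamma)$. Hence $\sigma(\gamma) = \gamma$ for every admissible arc, which pins $\sigma$ down as the identity in $B_n$.

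The only conceptually delicate step is the mirror-to-inverse translation in the middle paragraph; once this equivalence is in hand, the corollary reduces to Corollary \ref{rightveering} plus standard facts about right-veering mapping classes of the punctured disk. I expect this equivalence to be the main place to be careful, since mirror and inverse are genuinely different operations on braid words (they agree only up to reversal), and the argument really does pass through the conjugacy-class characterization rather than any direct identity of braids.
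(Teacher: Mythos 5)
Your overall strategy is the paper's: apply the contrapositive of Corollary \ref{rightveering} to both $\sigma$ and $m(\sigma)$, then argue that a braid which veers right ``in both directions'' must be trivial. The paper does this by observing that $m(\sigma)$ right-veering means $\sigma$ is left-veering and then quoting Lemma 3.1 of \cite{baldwin2012categorified} (a braid that is both right- and left-veering is the identity); your closing argument with the ``to the right of'' order and the fact that a mapping class fixing all admissible arcs is trivial is just a re-proof of that lemma, which is fine.

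The step that does not go through as written is the mirror-to-inverse translation. You invoke ``a braid fails to be right-veering exactly when it is conjugate to a word containing some $\sigma_i^{-1}$ but no $\sigma_i$'' as though it were the characterization used in Corollary \ref{rightveering}. But that proof uses (and the cited results, as used there, supply) only one implication: not right-veering $\Rightarrow$ conjugate to such a word. Trace your deduction: to get from ``$m(\sigma)$ is right-veering'' to ``$\sigma^{-1}$ is right-veering'' you argue the contrapositive, i.e.\ $\sigma^{-1}$ not right-veering $\Rightarrow$ $\sigma^{-1}$ conjugate to a word with $\sigma_i^{-1}$ and no $\sigma_i$ $\Rightarrow$ (inverting, applying $m$) $m(\sigma)$ conjugate to such a word $\Rightarrow$ $m(\sigma)$ not right-veering --- and the last arrow is exactly the converse implication, which is nowhere established in this paper and is not what Corollary \ref{rightveering} used. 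That converse is true, but it needs its own justification (for instance via the detection results of \cite{baldwin2012categorified} or a direct geometric argument), so as written there is a gap at precisely the step you flagged as delicate. The cleanest repair avoids braid words entirely: the mirror is realized by conjugating the mapping class by an orientation-reversing involution of the punctured disk, which exchanges ``to the right'' and ``to the left,'' so $m(\sigma)$ right-veering is literally equivalent to $\sigma$ left-veering (equivalently, $\sigma^{-1}$ right-veering); with that observation your argument, or a citation of Lemma 3.1 of \cite{baldwin2012categorified}, finishes the proof as in the paper.
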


\begin{proof}
The proof is similar to that of Corollary 1 of \cite{baldwin2012categorified}.  By Corollary \ref{rightveering}, $\sigma$ and $m(\sigma)$ are right-veering and hence $\sigma$ is also left-veering. By Lemma 3.1 of \cite{baldwin2012categorified}, $\sigma$ is the identity braid.
\end{proof}

We note that this provides a solution to the word problem (indeed, faster than that of \cite{baldwin2012categorified}), since it is only necessary to check if Plamenevskaya's invariant vanishes by the $E^{3}$ page of the spectral sequence from annular Khovanov homology to Khovanov homology.

$\kappa$ provides an obstruction to negative destabilization (Proposition \ref{neg}).  It can also provide an obstruction to positive destabilization for a braid in the case that $\kappa \neq 2$ for its mirror.  Corollary \ref{word} implies that it cannot provide an obstruction to destabilization in general.  One might hope to show that $\kappa \neq 2$ for a braid and $\kappa \neq 2$ for its mirror, implying that the braid is neither negatively destabilizable nor positively destabilizable.  However, Corollary \ref{word} shows that if this is the case, the braid is trivial.

We end this section with a remark on spectral sequences. For any annular link $\mathbb{L}$, there is a spectral sequence whose $E^{0}$ page is the Khovanov complex of $\mathbb{L}$, $CKh(\mathbb{L})$ and whose $E^{1}$ page is, as a group, the annular Khovanov homology of $\mathbb{L}$.  Since there are no differentials that drop the $k$-grading by one, the $E^{2}$ page is identical to the $E^{1}$ page. Therefore the first page at which the spectral sequence could collapse is $E^{3}$. The following proposition provides a counterexample to Conjecture 4.2 from \cite{hunt2015computing}.

\begin{Prop}\label{spectral} The spectral sequence from annular Khovanov homology to Khovanov homology does not always collapse at the $E^{3}$ page.  
\end{Prop}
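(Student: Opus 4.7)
The plan is to exhibit a concrete annular link on which the spectral sequence demonstrably fails to collapse at $E^3$, using the braids furnished by Proposition \ref{ngexample}. Take for instance $\beta = A(0,0)$, a closed $4$-braid with $\kappa(\beta) = 4$. The strategy is to argue that $[\psi(\bar\beta)]$ is a nonzero class on the $E^3$ page but vanishes on $E^\infty$, forcing $E^3 \neq E^\infty$.

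First I would translate the condition $\kappa(\beta) > 2$ into a statement about $E^3$. In the spectral sequence of the $k$-filtered complex $\CKh(\bar\beta)$, the differential $d_r$ drops $k$-grading by $r$, so $d_1, d_3, d_5, \ldots$ all vanish by parity, as already noted in the paragraph preceding the proposition. Because $\filt_{-n}$ is spanned by the single cycle $\psi$, we have $E^2_{-n} = H_\ast(\filt_{-n}) \cong \langle [\psi]\rangle$, and the image of $d_2 \colon E^2_{-n+2} \to E^2_{-n}$ hits $[\psi]$ precisely when some $y' \in \filt_{-n+2}$ satisfies $dy' = \psi$, which is exactly the condition $\kappa(\beta) = 2$. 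Since $\kappa(\beta) = 4$, no such $y'$ exists, so $[\psi]$ descends to a nonzero class on $E^3$.

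On the other hand, $\kappa(\beta) = 4 < \infty$ means $\psi$ is a boundary in the total complex, so $[\psi] = 0$ in $\Kh(\bar\beta) = E^\infty$. Combining, $[\psi]$ is alive on $E^3$ but dead on $E^\infty$, which forces $E^3 \neq E^\infty$; equivalently, some later differential $d_r$ (necessarily with $r$ even and at least $4$) must be nonzero, giving a counterexample to the conjecture of \cite{hunt2015computing}.

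The only step requiring care is the bookkeeping that translates the filtration-based definition of $\kappa$ into the page indexing of the spectral sequence; the one-dimensionality of $\filt_{-n}$ (so that $\filt_{-n} = \langle\psi\rangle$) is what makes this dictionary transparent and collapses any potential ambiguity about where $[\psi]$ survives. Beyond that, all of the substantive computation has already been done in Proposition \ref{ngexample}, so no serious analytical obstacle remains.
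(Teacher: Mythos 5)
Your proposal is correct and follows essentially the same route as the paper: both use $A(0,0)$ from Proposition \ref{ngexample}, observe that $\kappa(A(0,0))=4\neq 2$ means $[\psi]$ survives to $E^3$ (since $\filt_{-4}$ is spanned by the cycle $\psi$ and no $y'\in\filt_{-2}$ bounds it), while $\kappa<\infty$ forces $[\psi]=0$ in $\Kh=E^\infty$, so the sequence cannot collapse at $E^3$. The only cosmetic difference is that the paper writes out the explicit formula for $E^3_{4,-4}$ whereas you phrase the same fact through the vanishing of $d_2$ onto the bottom filtration level.
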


\begin{proof} We consider the braid $A(0,0)$ from Proposition \ref{ngexample}.  The distinguished element $\psi(A(0,0))$ lives in homological grading 4 (before any final shifts) and has $k$ grading $-4$.  Recall that $\psi(A(0,0))$ is unique in the lowest $k$-grading. By $E^{r}_{d,m}$ we mean the $r$'th page of the spectral sequence at homological grading $d$ and $k$-grading $m$. 

Following \cite{hutchings2011introduction} (recall: the differentials on $CKh$ increase homological grading),
\[ E^{3}_{4,-4} = \frac{\{x \in \mathcal{F}_{-4}CKh_{4} | d x \in \mathcal{F}_{-7}CKH_{5} \}}{\mathcal{F}_{-5}CKh_{4} + d(\mathcal{F}_{-2}CKh_{3})} = \frac{\{x \in \mathcal{F}_{-4}CKh_{4} | d x = 0 \}}{d(\mathcal{F}_{-2}CKh_{3})} \]
\[ =\frac{\psi(A(0,0))}{d(\mathcal{F}_{-2}CKh_{3})} = [\psi(A(0,0))]  \neq 0 \]
since $\kappa(A(0,0)) \neq 2$. However, $[\psi(A(0,0))] = 0 \in Kh_{4}(A)$, and hence $Kh_{4}(A(0,0)) \neq \bigoplus_{k=-4}^{4} E^{3}_{4,k}$.  

\end{proof}

Precisely the same argument yields a more general statement:

\begin{Prop}\label{boundspectral} Given a braid $\beta$, the length of the spectral sequence from $\SKh(\overline{\beta})$ to $\Kh(\overline{\beta})$ is bounded below by $\kappa(\beta)$.  
\end{Prop}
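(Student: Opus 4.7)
The plan is to replay the argument in the proof of Proposition \ref{spectral} verbatim, with the numerical data of $A(0,0)$ replaced by general $\beta$. Write $n$ for the braid index of $\beta$; the input from Definition \ref{def:kappa} is that the smallest filtration level containing a chain $w$ with $dw = \psi(\bar\beta)$ is $\mathcal{F}_{\kappa(\beta) - n}$, while $\psi(\bar\beta)$ itself is the unique nonzero element of the lowest level $\mathcal{F}_{-n}$.

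The key step is then to write down the Hutchings page-$r$ formula used in the proof of Proposition \ref{spectral},
\[ E^{r}_{d,-n} = \frac{\{x \in \mathcal{F}_{-n}\CKh_{d} : dx \in \mathcal{F}_{-n-r}\CKh_{d+1}\}}{\mathcal{F}_{-n-1}\CKh_{d} + d(\mathcal{F}_{-n+r-1}\CKh_{d-1})}, \]
and to specialize to the class of $\psi(\bar\beta)$. Because $\psi(\bar\beta)$ is a cycle the numerator condition is automatic, and because $\mathcal{F}_{-n-1} = 0$ the class $[\psi(\bar\beta)]$ vanishes in $E^{r}_{d,-n}$ exactly when $\psi(\bar\beta) \in d(\mathcal{F}_{-n+r-1})$. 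By the defining property of $\kappa(\beta)$ this happens precisely when $-n + r - 1 \geq \kappa(\beta) - n$, equivalently when $r \geq \kappa(\beta) + 1$.

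Consequently $[\psi(\bar\beta)]$ survives nontrivially on every page $E^{r}$ with $r \leq \kappa(\beta)$. Since $[\psi(\bar\beta)] = 0$ in $E^{\infty} \cong \Kh(\bar\beta)$ whenever $\kappa(\beta) < \infty$, the spectral sequence cannot have stabilized by page $E^{\kappa(\beta)}$, which is exactly the lower bound claimed. The only real obstacle is bookkeeping — keeping the Hutchings indexing consistent with the one fixed in Proposition \ref{spectral} — but no new ideas are required beyond the $E^{3}$ computation already carried out there.
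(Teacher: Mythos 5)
Your proposal is correct and follows essentially the same route as the paper: it replays the $E^{3}$ computation from Proposition \ref{spectral} with the page index $r$ and braid index $n$ left general, uses the minimality in Definition \ref{def:kappa} to see that $[\psi(\bar\beta)]$ persists at the lowest filtration level through page $E^{\kappa(\beta)}$ and dies only afterwards, and concludes the sequence cannot stabilize before page $\kappa(\beta)$. The bookkeeping (including the harmless simplification of the denominator at the lowest filtration level, where $\mathcal{F}_{-n-1}=0$) matches the paper's indexing, so no gap remains.
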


\section{Invariants in reduced Khovanov homology}\label{section:reduced}

It is implicit in the proof of Proposition \ref{boundspos} that $\kappa$ increases under positive stabilization at $p$ precisely if every element which realizes $\kappa$ has a canonical summand in which $p$ lies on a trivial $v_+$-labeled circle.  This situation cannot occur in (one version of) reduced Khovanov homology, and so one might hope that a ``reduced $\kappa$'' is an invariant of transverse links.  That's not quite the case -- the eager reader may skip to the examples at the end of this section -- but the reduced invariants are interesting in their own right.

In this section let $p$ be a non-double point on an $n$-strand annular braid diagram $\diagram$ of $\bar\beta$.  For convenience, we will assume that the last tensor factor of each generator of $\CKh(\diagram)$ corresponds to the component containing $p$.   There is a chain map $x_p: \CKh(\diagram) \to \CKh(\diagram)$ defined on generators by 
\begin{align*} 
x_p (y \otimes v_+) & = y \otimes v_-\\
x_p (y \otimes v_-) &= 0
\end{align*}
There are two flavors of \emph{reduced Khovanov homology.}  The \emph{reduced subcomplex} $\reducedsub_p(\diagram)$ is defined as $\ker(x_p)$.  The \emph{reduced quotient complex} $\reducedquo_p(\diagram)$ is defined as $\coker(x_p)$.  The homologies of these complexes are both called reduced Khovanov homology; the ambiguity is justified by the fact that their homologies are isomorphic as $h$- and $q$-graded complexes (with a constant shift in the $q$-gradings).  It is clear that $\reducedsub_p(\diagram)$ has a basis of canonical generators.  The projections of canonical generators of the form $y \otimes v_+$ form a basis of $\reducedquo_p(\diagram)$.  Whenever we take a representative of an element in the quotient complex, we will assume it is a sum of these canonical generators.

The $k$-grading on $\CKh(\diagram)$ induces a $k$-grading on each variant.  On the subcomplex $\reducedsub_p(\diagram)$ this is simply the restriction.  We define the $k$-grading on $\reducedquo_p(\diagram)$ via canonical representatives: if $y$ is the canonical representative of $\ul{y} \in \reducedquo_p(\diagram)$, then $k(\ul{y}) = k(y)$.  However, the isomorphism between the two variants is not in general $k$-filtered.  Thus we will distinguish their homologies as the \emph{reduced homology} $\widetilde{Kh}_p(\diagram)$ and the \emph{reduced quotient homology} $\underline{Kh}_p(\diagram)$.  We write $\widetilde{\filt}_i$ and $\underline{\filt}_i$ for the $i$th filtered levels of $\reducedsub_p(\diagram)$ and $\reducedquo_p(\diagram)$ respectively.

Each complex supports a variant of the transverse element $\psi(\diagram)$.  The cycle corresponding to $\psi(\diagram)$ is also a cycle in the subcomplex $\reducedsub_p(\diagram)$ for any $p$.  When we wish to emphasize that we are considering $\psi(\diagram)$ as an element of the subcomplex, we will write it as $\subpsip(\diagram)$.  Plamenevskaya defines the reduced quotient invariant $\quotpsip(\diagram)$ to be the image of the chain $v_- \otimes \cdots \otimes v_- \otimes v_+$ in $\reducedquo_p(\diagram)$.  Both $\subpsip$ and $\quotpsip$ are invariant under braid conjugation and stabilization away from $p$ in the same sense (and with the same proofs) as $\psi$.  Both cycles have the lowest $k$-grading in their respective complexes, but $\quotpsip$ does not necessarily generate that lowest level.

Note that these constructions depend on a choice of $p$ on a particular diagram for a link, so we will not write ``$\subpsip(\bar\beta)$'' or ``$\quotpsip(\bar\beta)$''.  

\begin{Def} Let $\beta \in B_n$, let $\diagram$ be an annular diagram for $\bar\beta$, and let $p \in \diagram$.  If $\subpsip(\diagram)$ is a boundary in $\reducedsub_p(\diagram)$, define \[ \subkapp(\diagram) = n + \min \{i :  [\subpsip(\diagram)] = 0 \in H_*(\widetilde{\filt}_i(\diagram))\}. \]  If $\subpsip(\diagram)$ is not a boundary, then define $\subkapp(\diagram) = \infty$.  If $\psi_p(\diagram)$ is a boundary in $\reducedquo_p(\diagram)$, define \[ \quotkapp(\diagram) = n + \min \{i :  [\quotpsip(\diagram)] = 0 \in H_*(\ul{\filt}_i(\diagram))\}. \]  If $\quotpsip(\diagram)$ is not a boundary, then define $\quotkapp(\diagram) = \infty$.\end{Def}

The arguments of section \ref{section:definitionandinvariance} show that $\subkapp(\diagram)$ and $\quotkapp(\diagram)$ are invariant under positive stabilization away from $p$ and conjugations that do not cross $p$.

\begin{Lem} For a fixed diagram $\diagram$, either $\kappa(\diagram)$, $\subkapp(\diagram)$, and $\quotkapp(\diagram)$ are all finite or all infinite.  In the finite case, $\kappa(\diagram) \leq \subkapp(\diagram) \leq \quotkapp(\diagram) \leq \subkapp(\diagram) + 2$.\end{Lem}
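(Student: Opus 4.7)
My plan is to establish the inequalities first using chain maps between the three complexes together with Lemma \ref{lem:algebra}, and then address the finiteness claim. The inclusion $\iota: \reducedsub_p(\diagram) \hookrightarrow \CKh(\diagram)$ is a chain map filtered of degree zero in $k$, and it carries $\subpsip(\diagram)$ to $\psi(\diagram)$, so Lemma \ref{lem:algebra} immediately yields $\kappa(\diagram) \leq \subkapp(\diagram)$. To compare the two reduced invariants, I would construct a chain isomorphism $\phi: \reducedquo_p(\diagram) \to \reducedsub_p(\diagram)$ by factoring the chain map $x_p$: since $\ker(x_p) = \im(x_p) = \reducedsub_p$, the endomorphism $x_p$ descends to an isomorphism $\reducedquo_p = \CKh/\reducedsub_p \xrightarrow{\sim} \im(x_p) = \reducedsub_p$, defined on canonical representatives by $\ul{y \otimes v_+} \mapsto y \otimes v_-$.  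In particular $\phi(\quotpsip) = \subpsip$.

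I would next analyze how $\phi$ interacts with the filtration.  Replacing $v_+$ by $v_-$ on the circle containing $p$ preserves the $k$-grading when that circle is trivial and lowers it by $2$ when non-trivial, so $\phi$ is filtered of degree $0$ while $\phi^{-1}$ is filtered of degree $2$.  Applying Lemma \ref{lem:algebra} to $\phi$ gives $\subkapp(\diagram) \leq \quotkapp(\diagram)$.  I would obtain the bound $\quotkapp \leq \subkapp + 2$ by mimicking the proof of Lemma \ref{lem:algebra} with $\phi^{-1}$ in place of $f$, accounting for its degree-$2$ shift: given $\tilde y \in \widetilde{\filt}_i$ with $d\tilde y = \subpsip$, the element $\phi^{-1}(\tilde y)$ lies in $\ul{\filt}_{i+2}$ and satisfies $d(\phi^{-1}(\tilde y)) = \phi^{-1}(\subpsip) = \quotpsip$.

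For the finiteness claim, the fact that $\phi$ is a chain isomorphism carrying $\quotpsip$ to $\subpsip$ immediately gives $\subkapp(\diagram) < \infty$ iff $\quotkapp(\diagram) < \infty$, and $\kappa \leq \subkapp$ supplies $\subkapp < \infty \Rightarrow \kappa < \infty$.  The remaining implication $\kappa(\diagram) < \infty \Rightarrow \subkapp(\diagram) < \infty$ is the subtle point, and is where I expect the main work to lie.  I would approach it via the short exact sequence $0 \to \reducedsub_p \xrightarrow{\iota} \CKh \xrightarrow{\pi} \reducedquo_p \to 0$ and its long exact sequence on homology: the desired implication amounts to injectivity of $\iota_*$, or equivalently vanishing of the connecting homomorphism $\partial: H(\reducedquo_p) \to H(\reducedsub_p)$.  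Over $\bz/2\bz$ coefficients this vanishing follows from Shumakovitch's splitting $\Kh(L;\bz/2\bz) \cong \widetilde{\Kh}(L;\bz/2\bz) \otimes V$ with $V = \bz/2\bz\{v_+,v_-\}$, which forces the connecting map to vanish in every degree by dimension counting and identifies $[\psi]$ with $[\subpsip] \otimes v_-$.
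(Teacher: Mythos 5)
Your proposal is correct and follows essentially the same route as the paper: the inclusion gives $\kappa(\diagram) \leq \subkapp(\diagram)$, the map induced by $x_p$ (your $\phi$, which packages the paper's manipulations $z \mapsto x_p z$ and $y \mapsto y^+$) gives $\subkapp(\diagram) \leq \quotkapp(\diagram) \leq \subkapp(\diagram) + 2$, and finiteness comes from the short exact sequence $0 \to \reducedsub_p(\diagram) \to \CKh(\diagram) \to \reducedquo_p(\diagram) \to 0$ together with the vanishing of the connecting homomorphism. The only difference is that you justify that vanishing via Shumakovitch's splitting over $\bz/2\bz$, a step the paper simply asserts, which is a welcome extra detail rather than a different approach.
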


\begin{proof} There is a short exact sequence of complexes \[ 0 \to \reducedsub_p(\diagram) \xrightarrow{i} \CKh(\diagram) \xrightarrow{\pi} \reducedquo_p(\diagram) \to 0 \] where $i$ is the inclusion and $\pi$ is the projection to the quotient.  The induced map on homology $i_*$ carries $[\subpsip(\diagram)]$ to $[\psi(\diagram)]$, so if $[\psi(\diagram)] \neq 0$ then $[\subpsip(\diagram)] \neq 0$.  The connecting homomorphism in the long exact sequence on homology is zero so $i_*$ is injective.  This shows if $[\subpsip(\diagram)] \neq 0$, then $[\psi(\diagram)] \neq 0$.

The first piece of the inequality follows immediately from the fact that $\reducedsub_p(\diagram)$ is a subcomplex of $\CKh(\diagram)$.  For the next part, suppose that $\ul{z}$ realizes $\quotkapp(\diagram)$.  Then $d(x_pz) = \psi(\diagram)$ and $k(x_pz) \leq k(\ul{z})$, so $\subkapp(\diagram) \leq \quotkapp(\diagram)$.  On the other hand, suppose that $y$ realizes $\subkapp(\diagram)$; every canonical summand of $y$ has a $v_-$ at $p$.  Let $y^+$ be the element obtained from $y$ by changing those $v_-$'s to $v_+$'s.  Clearly $x_pd(\ul{y}^+) = \psi(\diagram)$, so \[ dy^+ = \quotpsip(\diagram) + \text{terms with $v_-$ at $p$}. \]  Therefore $d\underline{y}^+ = \quotpsip(\diagram)$ and $\quotkapp(\diagram) \leq k(y^+) + n \leq k(y) + 2 + n = \subkapp(\diagram) + 2.$  (This also shows that $\subkapp(\diagram)$ is finite if and only if $\quotkapp(\diagram)$ is finite.)  
\end{proof}

The reduced invariants are stable under positive stabilization at $p$ in the following sense: let $p'$ be a point on the same arc as $p$.  For each reduced complex, the positive stabilization map is filtered and preserves Plamanevskaya's invariant, so Lemma \ref{lem:algebra} implies that the appropriate version of $\kappa$ does not change.  But after this operation the image of $p$ is not an innermost point.  We instead study $\vec{S}^{p'}$, the operation of stabilizing at $p'$ and then moving the basepoint to some point $q$ on the new innermost strand.

\begin{Prop} 
Let $\diagram$ be a diagram of $\bar\beta$.  Then $\tilde{\kappa}_q(\vec{S}^{p'}\diagram) \leq \subkapp(\diagram)$ and $\ul{\kappa}_q(\vec{S}^{p'}\diagram) \leq \quotkapp(\diagram) + 2$.
\end{Prop}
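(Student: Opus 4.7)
The plan is to reuse the positive-stabilization chain map $\rho: \CKh(\diagram) \to \CKh(\vec{S}^{p'}\diagram)$ from Proposition~\ref{boundspos}, which acts on the component containing $p$ by $\rho(v_{-}) = v_{-}\otimes v_{-}$ and $\rho(v_{+}) = v_{+}\otimes v_{-} + v_{-}\otimes v_{+}$, with the second tensor factor labeling the new innermost circle that contains $q$. Since $\rho$ lands in the $0$-resolution of the added positive crossing, the topology of every original component is preserved and the new circle at $q$ is non-trivial. The needed $k$-grading bookkeeping is then: a $v_{\pm}$ at $q$ shifts $k$ by $\pm 1$, and the swap $v_{+} \mapsto v_{-}$ at $p$ shifts $k$ by $0$ or $-2$ depending on whether the component at $p$ is trivial or non-trivial in the given resolution.

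For the first inequality, I would take $y \in \reducedsub_p(\diagram)$ realizing $\subkapp(\diagram)$. Every canonical summand of $y$ carries $v_{-}$ at $p$, so $\rho(y)$ is a sum of canonical generators with $v_{-}\otimes v_{-}$ at the pair of components $(p,q)$, placing $\rho(y)$ in $\reducedsub_q(\vec{S}^{p'}\diagram)$; the bookkeeping gives $k(\rho(y)) = k(y) - 1$. Because $\rho$ is a chain map with $\rho(\psi(\diagram)) = \psi(\vec{S}^{p'}\diagram)$, we have $d\rho(y) = \widetilde{\psi}_q(\vec{S}^{p'}\diagram)$. Normalizing by the new braid index yields $\tilde{\kappa}_q(\vec{S}^{p'}\diagram) \leq k(\rho(y)) + (n+1) = k(y) + n = \subkapp(\diagram)$.

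For the second inequality, I would first check that $\pi \circ \rho$ descends to a chain map $\reducedquo_p(\diagram) \to \reducedquo_q(\vec{S}^{p'}\diagram)$: if $x \in \im(x_p)$ then $x$ has $v_{-}$ at $p$, so $\rho(x)$ has $v_{-}$ at $q$ and hence $\pi\rho(x)=0$. Let $z$ be the canonical representative of some $\underline{z}$ realizing $\quotkapp(\diagram)$, so every summand of $z$ has $v_{+}$ at $p$. The piece $v_{+}\otimes v_{-}$ in $\rho(v_+)$ is killed by $\pi$, so the canonical representative of $\pi\rho(z)$ is obtained per summand by swapping $v_{+} \mapsto v_{-}$ at $p$ and appending $v_{+}$ at $q$. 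This operation sends the canonical representative $v_{-}\otimes\cdots\otimes v_{-}\otimes v_{+}$ of $\quotpsip(\diagram)$ to the canonical representative of $\underline{\psi}_q(\vec{S}^{p'}\diagram)$, so the chain map property gives $d(\pi\rho(z))=\underline{\psi}_q(\vec{S}^{p'}\diagram)$. The per-summand $k$-shift is $(0\text{ or }-2)+1$, i.e., at most $+1$, so $k(\pi\rho(z)) \leq k(z)+1$, and normalizing yields $\underline{\kappa}_q(\vec{S}^{p'}\diagram) \leq (k(z)+1)+(n+1) = \quotkapp(\diagram)+2$.

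The main obstacle is the quotient case: one must carefully separate the two summands of $\rho(v_+)$, verify that $\pi\rho$ descends to the quotient, and confirm that the surviving piece bounds $\underline{\psi}_q(\vec{S}^{p'}\diagram)$ on the nose rather than merely modulo $\im(x_q)$, which is precisely where the identity $\rho(\im x_p) \subset \im x_q$ is needed. The $k$-grading analysis at $p$ is routine but must be done resolution-by-resolution, since the triviality of the component at $p$ can vary across summands of $z$.
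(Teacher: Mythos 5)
Your proposal is correct and takes essentially the same route as the paper: the element you produce in the quotient case, $\pi\rho(z) = (x_p z)\otimes v_+$, is exactly the label-swapped element $\underline{z}'$ the paper constructs, and your $k$-grading bookkeeping (exact shift $-1$ on the reduced subcomplex, shift at most $+1$ on the quotient, before renormalizing by the new braid index $n+1$) matches the paper's. The only cosmetic difference is that you verify $d\underline{z}' = \underline{\psi}_q(\vec{S}^{p'}\diagram)$ by checking that $\pi\circ\rho$ kills $\im(x_p)$ and hence descends to a filtered-up-to-shift chain map of quotient complexes, whereas the paper verifies the same identity directly via the computation $x_q d\underline{z}' = S^{p'}\psi(\diagram)$.
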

\begin{proof} The first inequality follows from Lemma \ref{lem:algebra} once one makes the observation that the positive stabilization map carries $\reducedsub_p(\diagram)$ to a subcomplex of $\reducedsub_q(\vec{S}^{p'}\diagram)$ and carries $\subpsip(\diagram)$ to $\widetilde{\psi}_q(\diagram)$.

Suppose that $\ul{z}$ realizes $\quotkapp(\diagram)$.  Let $q$ be a point on the innermost strand of $S^{p'}\diagram$.  Note that any canonical generator of $\ul{\CKh}(S^{p'}\diagram)$ has $v_-$ at $q$ and $v_+$ at $p$.  Let $\ul{z}' \in \ul{\CKh}_q(S^{p'}\diagram)$ be the element whose canonical representative is obtained from $S^{p'}\underline{z}$ by swapping these labels.  Note that $dx_q\underline{z}' = S^{p'}\psi(\diagram) = x_qd\underline{z}'$, so $d\underline{z}' = \ul{\psi}_q(\diagram)$.  Clearly $k(\ul{z}') \leq k(\underline{z}) + 2$.
\end{proof}

\begin{Rem*}
It is interesting to consider the sharpness of these inequalities using annular Khovanov homology.  The map $x_p$ is filtered and therefore induces a map on $\SKh(\diagram) = \bigoplus \filt_i/\filt_{i-1}$, the annular Khovanov homology of $\diagram$. 

Let $p$, $p'$, $z$, and $z'$ be as in the previous proof.  The point $q$ lies on a non-trivial circle in every resolution of $S^{p'}\diagram$.  Thus $k(z') > k(S^{p'}z) = k(z)$ precisely if $p$ lies on a trivial circle in some canonical summand of $z$.  Equivalently, $k(z') = k(z)$ precisely if $p$ lies on a non-trivial circle in every canonical summand of $z$.  Thus $\quotkapp(\diagram) = \quotkapp(\vec{S}^{p'})$ if and only if some $z$ realizes $\quotkapp(\diagram)$ and  $p$ lies on a non-trivial circle in every canonical summand of $z$.  Write $\langle z \rangle$ for the image of $z$ in $\SKh(\diagram)$.  Then $\quotkapp(\diagram) = \quotkapp(\vec{S}^{p'}\diagram)$ if and only if $\langle z \rangle \in \ker(x_p)$ for some $z$ which realizes $\quotkapp(\diagram)$.

The map $x_p$ induces maps on $\bigoplus \filt_i/\filt_{i-1}$, $\bigoplus \ul\filt_i/\ul\filt_{i-1}$, and $\bigoplus \tilde\filt_i/\tilde\filt_{i-1}$.  Recall that these are the annular Khovanov chain group and the two reduced annular Khovanov chain groups, respectively.  In this language, $k(z') = k(z)$ -- and therefore $\quotkapp(\diagram) = \ul{\kappa}_q(\vec{S}^{p'}\diagram)$ -- if and only if the image of $z$ in $\SKh(\diagram)$ lies in the kernel of $x_p$.
\end{Rem*}

\begin{figure}
\centering
  \includegraphics{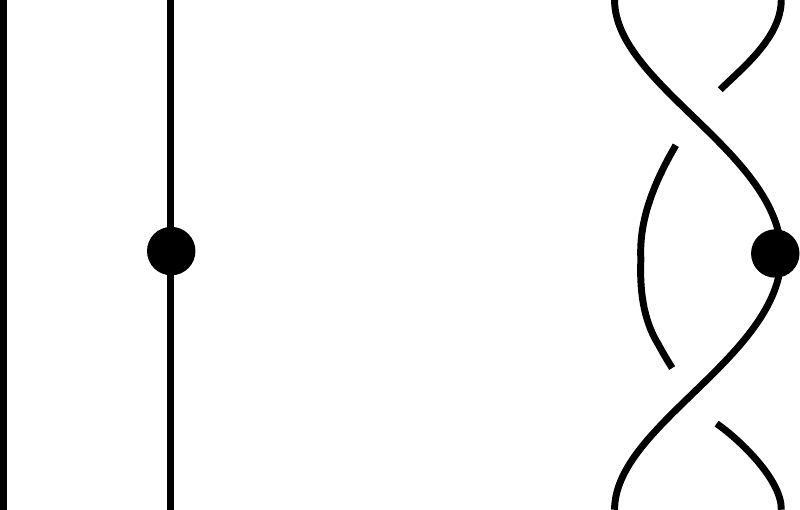}
  \caption{The result of the operation $\vec{C}_p$ on two strands.}
\label{fig:conjugation}
\end{figure}

While $\quotkapp$ is not preserved under stabilization, it is preserved under a certain sort of conjugation over $p$.  Denote by $C_p$ the operation of performing a braidlike Reidemeister 2 move over $p$.  (In terms of braid words, this inserts $\sigma_{n-1}\sigma_{n-1}^{-1}$ or $\sigma_{n-1}^{-1}\sigma_{n-1}$.)  Denote by $\vec{C}_p$ the operation $C_p$ followed by moving the basepoint to the innermost strand at $q$.  See Figure \ref{fig:conjugation}.  The Reidemeister 2 map induces a filtered map $\reducedquo_p(\diagram) \to \reducedquo_q(\vec{C}_p\diagram)$ which carries $\psi$ to $\psi$.
\begin{Prop}
$\quotkapp(\diagram) = \ul\kappa_q(\vec{C}_p\diagram).$
\end{Prop}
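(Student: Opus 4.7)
The plan is to apply Lemma \ref{lem:algebra} in both directions, using the filtered chain map described in the paragraph preceding the proposition together with a filtered inverse constructed from the inverse Reidemeister 2 move.

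From the paragraph just before the proposition, the Reidemeister 2 move induces a filtered chain map $\Phi : \reducedquo_p(\diagram) \to \reducedquo_q(\vec{C}_p\diagram)$ with $\Phi(\quotpsip(\diagram)) = \ul\psi_q(\vec{C}_p\diagram)$, so Lemma \ref{lem:algebra} immediately yields $\ul\kappa_q(\vec{C}_p\diagram) \leq \quotkapp(\diagram)$. For the reverse inequality, I would construct a filtered chain map $\Psi : \reducedquo_q(\vec{C}_p\diagram) \to \reducedquo_p(\diagram)$ with $\Psi(\ul\psi_q(\vec{C}_p\diagram)) = \quotpsip(\diagram)$ induced by the inverse Reidemeister 2 move that removes the two crossings inserted by $C_p$. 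At the unreduced level this map is filtered by the same analysis used in the conjugation invariance proof in Section \ref{section:definitionandinvariance}: it is a sum of identity, saddle, cup, and cap maps, and none of the saddles or cup/cap pieces cross the braid axis.

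The main obstacle is verifying that this unreduced inverse R2 map descends to the reduced quotient complexes, whose basepoints $p$ and $q$ live on different strands of $\vec{C}_p\diagram$. The geometric content here is that $p$ and $q$ lie on the same circle in the braidlike resolution of $\vec{C}_p\diagram$: this resolution is isotopic to the braidlike resolution of $\diagram$, and the small arc bearing $q$ (created inside the bigon of the R2 move) is reattached to the $p$-strand once the two new crossings are resolved braid-like. From this it follows that the inverse R2 map takes any canonical generator with $v_-$ at $q$ to a chain every canonical summand of which has $v_-$ at $p$, so $\Psi$ passes to the quotient, and a direct check on the generator $v_- \otimes \cdots \otimes v_- \otimes v_+$ of lowest $k$-grading confirms $\Psi(\ul\psi_q(\vec{C}_p\diagram)) = \quotpsip(\diagram)$. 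Applying Lemma \ref{lem:algebra} to $\Psi$ gives the reverse inequality, and hence the claimed equality.
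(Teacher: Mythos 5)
Your forward direction is fine (it is exactly the filtered map the paper asserts, fed into Lemma \ref{lem:algebra}), but the reverse direction has a genuine gap: the claim that the inverse Reidemeister 2 map descends to a chain map $\reducedquo_q(\vec{C}_p\diagram) \to \reducedquo_p(\diagram)$ is not established by your argument, and for the standard homotopy inverse it is in fact false. To descend to the quotients you need the map to carry $\im(x_q)$ into $\im(x_p)$, i.e.\ you must check \emph{every} canonical generator with $v_-$ on the circle through $q$, in \emph{every} resolution of $\vec{C}_p\diagram$ --- not just the braidlike one. Your geometric claim only addresses resolutions in which both new crossings are resolved braid-like (and even there it is stated backwards for the paper's configuration: with the new strand passing over $p$, the point $p$ sits on the arc pushed to the adjacent position, so $p$ and $q$ lie on two \emph{different} nested circles in the braidlike resolution; they share a circle precisely in the other local resolutions). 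The problematic resolutions are the ones containing the bigon circle, which carries $q$. There the standard inverse R2 map sends $y \otimes v_-$ (bigon circle labeled $v_-$, hence $v_-$ at $q$) to the result of deleting that circle and applying the saddle at the bigon site; when that saddle is a comultiplication (or a merge of two $v_+$-labeled circles), the output has canonical summands with $v_+$ on the circle through $p$. So generators of $\im(x_q)$ are not sent into $\im(x_p)$, the map does not induce a chain map on the reduced quotient complexes, and Lemma \ref{lem:algebra} cannot be applied to it as you propose.

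Since the paper states this proposition with no written proof beyond the preceding remark, the comparison is really about whether your supplied reverse inequality works, and as written it does not: you would need either to modify the inverse map (and verify both the chain-map and descent conditions by hand, exactly the delicate point your ``same circle'' remark glosses over), or to avoid constructing an inverse altogether. One route in the latter spirit: apply the asserted forward construction a second time, performing $\vec{C}_q$ on $\vec{C}_p\diagram$ so that the basepoint returns to the original strand; the resulting based diagram differs from $\diagram$ by Reidemeister 2 moves away from the basepoint, under which $\quotkapp$ is already known to be invariant, and the two applications of the one-sided inequality then force equality.
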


To dash any hope that $\quotkapp$ or $\subkapp$ might be transverse invariants, we note that both invariants depend on $p$.  For $\quotkapp$ this is true even for negative stabilizations.  

\begin{Ex*}
Let $\beta = \sigma_1\sigma_2^{-1} \in B_3$.  Certainly $\psi$ is null-homologous and $\kappa = \subkapp = 2$ for any $p$.  Let $p_1$ and $p_2$ be points on the first and second strands of the braid.  Then
\begin{align*}
 \ul\kappa_{p_1} &= 2\\
 \ul\kappa_{p_2} &= 4
\end{align*}
\end{Ex*}
For a meatier example, we revisit the transversely non-simple family using the previously advertised computer program.

\begin{figure}[ht!]
\hspace*{-0.9in}
	\centerline{\includegraphics[scale=1]{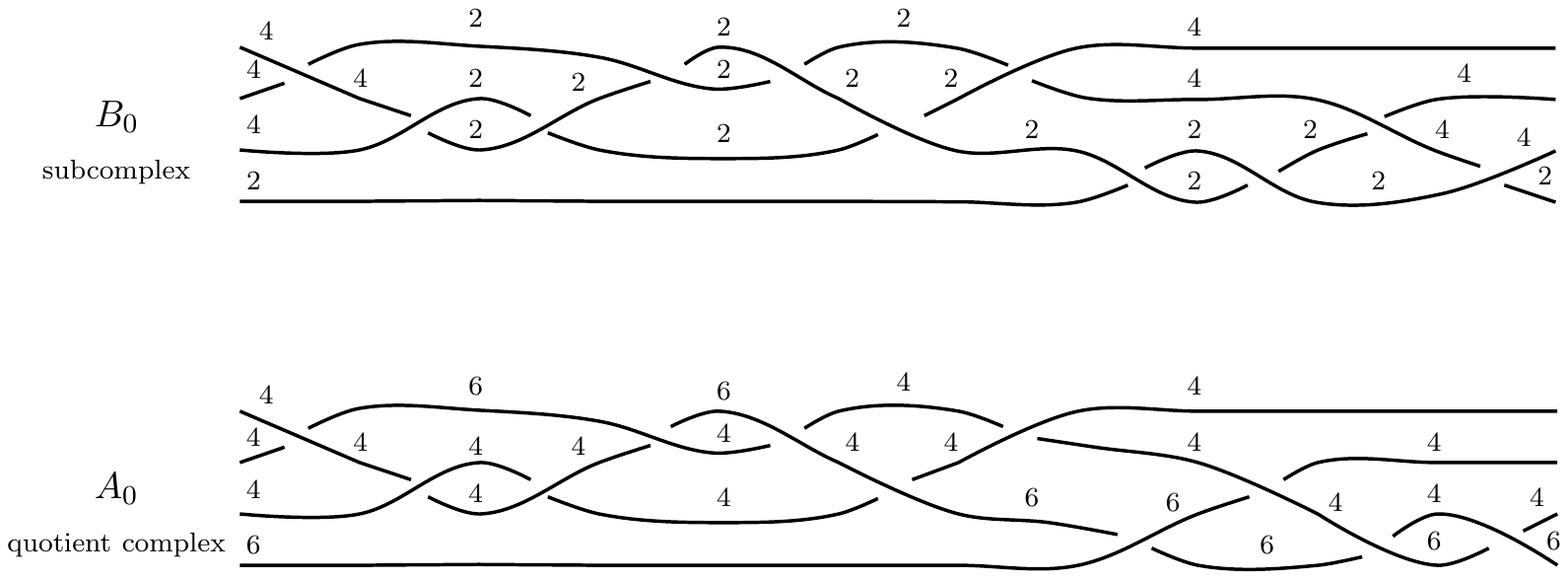}}
\caption{Values of $\quotkapp$ and $\subkapp$ may depend on $p$.  The number above each arc represents a value of $\quotkapp$ or $\subkapp$ based on placing $p$ on that arc.}
\label{fig:reducedinteresting}

\end{figure}
\begin{Ex*}
Recall that Ng and Kandhawit define two infinite families of braids $A(a,b)$ and $B(a,b)$ so that, for any $a, b \in \bz_{\geq 0}$, the closures of $A(a,b)$ and $B(a,b)$ have the same topological knot type and self-linking number but are not transversely isotopic.  Write $A_0$ and $B_0$ for $A(0,0)$ and $B(0,0)$.  We have already seen that $\kappa(A_0) = 4$ and $\kappa(B_0) = 2$.  For any $p \in \bar A_0$ we have $\subkapp(A_0) = 4$ and $\quotkapp(B_0) = 4$.  On the other hand, $\quotkapp(A_0)$ and $\subkapp(B_0)$ depend on $p$.  See Figure \ref{fig:reducedinteresting}.  
\end{Ex*}

\begin{Rem*}
It is straightforward to check that the two candidates for ``reduced annular Khovanov homology'' are not isomorphic (for example with the braid $\sigma_{1} \in B_{2}$).  This fact is not mentioned elsewhere in the literature. The calculations above show that the difference between the two is significant, and that the two reductions might provide different information in light of the $k$-grading.
\end{Rem*}

\footnotesize
\bibliographystyle{amsalpha}
\bibliography{Paper}

\end{document}